\def\AA{{\mathbb A}}
\def\CC{{\mathbb C}}
\def\FF{{\mathbb F}}
\def\KK{{\mathbb K}}
\def\QQ{{\mathbb Q}}
\def\QQ{{\mathbb Q}}
\def\RR{{\mathbb R}}
\def\TT{{\mathbb T}}
\def\ZZ{{\mathbb Z}}
\def\0{{\mathbf 0}}
\def\1{{\mathbf 1}}
\def\Dcal{{\mathcal D}}
\def\Ocal{{\mathcal O}}
\def\Xcal{{\mathcal X}}
\def\Beer{\mathrm{Beer}}
\def\Taib{\mathrm{Taib}}
\def\Berk{\mathrm{Berk}}
\def\Fourier{\mathrm{Fourier}}
\theoremstyle{plain}
\newtheorem{thm}{Theorem}
\newtheorem{prop}[thm]{Proposition}
\newtheorem{lem}[thm]{Lemma}
\theoremstyle{definition}
\newtheorem*{Koksma}{Non-Archimedean Koksma Inequality}
\newtheorem{ex}{Example}
\title[Non-Archimedean Koksma inequalities]{Non-Archimedean Koksma inequalities, \\ variation, and Fourier analysis}
\author{Clayton Petsche}
\address{Clayton Petsche; Department of Mathematics; Oregon State University; Corvallis OR 97331 U.S.A.}
\email{petschec@math.oregonstate.edu}
\author{Naveen Somasunderam}
\address{Naveen Somasunderam; Mathematics Department; SUNY Plattsburgh; Plattsburgh NY 12901; U.S.A.}
\email{nsoma001@plattsburgh.edu}
\date{March 14, 2022}
\begin{document}

\begin{abstract}
We examine four different notions of variation for real-valued functions defined on the compact ring of integers of a non-Archimedean local field, with an emphasis on regularity properties of functions with finite variation, and on establishing non-Archimedean Koksma inequalities. The first version of variation is due to Taibleson, the second due to Beer, and the remaining two are new.  Taibleson variation is the simplest of these, but it is a coarse measure of irregularity and it does not admit a Koksma inequality.  Beer variation can be used to prove a Koksma inequality, but it is order-dependent and not translation invariant.  We define a new version of variation which may be interpreted as the graph-theoretic variation when a function is naturally extended to a certain subtree of the Berkovich affine line.  This variation is order-free and translation invariant, and it admits a Koksma inequality which, for a certain natural family of examples, is always sharper than Beer's.  Finally, we define a Fourier-analytic variation and a corresponding Koksma inequality which is sometimes sharper than the Berkovich-analytic inequality.
\end{abstract}

\maketitle


\section{Introduction}

Let $K$ be a locally compact field equipped with a nontrivial, non-Archimedean absolute value $|\cdot|$.  Examples include the $p$-adic field $\QQ_p$ for a prime number $p$, or more generally any finite extension of $\QQ_p$, as well as the field of fractions $\FF_q(\!(T)\!)$ of the formal power series ring $\FF_q[\![T]\!]$ over a finite field $\FF_q$.  (According to the classification theorem for local fields (\cite{MR1680912} $\S$ 4.2), these examples exhaust all possibilities).  Let $\Ocal=\{x\in K\mid |x|\leq1\}$ be the compact ring of integers in $K$, and let $\mu$ be the Haar measure on $\Ocal$ normalized so that $\mu(\Ocal)=1$.

Let $\{X_n\}$ be a sequence of finite subsets of $\Ocal$ such that $|X_n|\to+\infty$.  Such a sequence is said to be {\bf equidistributed} in $\Ocal$ if, for every disc $D\subseteq\Ocal$, we have
\begin{equation}\label{EquidistributionDefinition1}
\lim_{n\to+\infty}\frac{|X_n\cap D|}{|X_n|}=\mu(D).
\end{equation}
A standard approximation argument can be used to show that an equivalent condition characterizing equidistribution is that, for all continuous functions $f:\Ocal\to\RR$, we have
\begin{equation}\label{EquidistributionDefinition2}
\lim_{n\to+\infty}\frac{1}{|X_n|}\sum_{x\in X_n}f(x)=\int_\Ocal f\,d\mu.
\end{equation}
Thus if a finite set $X$ is distributed nearly uniformly throughout $\Ocal$, then the Riemann-type sum $(1/|X|)\sum_{x\in X}f(x)$ should closely approximate the integral $\int_\Ocal f\,d\mu$.  

To make this idea quantitative, one defines the {\bf discrepancy} of a finite subset $X\subseteq\Ocal$ by 
\begin{equation}\label{DiscrepancyDefinition}
\Delta(X)=\sup_{D\subseteq \Ocal}\left|\frac{|X\cap D|}{|X|}-\mu(D)\right|,
\end{equation}
the supremum over all discs $D\subseteq\Ocal$.  Thus for a given finite set of points $X$, the quantity $\Delta(X)$ measures the maximal difference, over all discs $D\subseteq\Ocal$, between the actual proportion of the points occurring in the disc $D$ and the expected proportion.  It is then desirable to obtain a result of the following type.

\begin{Koksma}
For a particular class of sufficiently regular functions $f:\Ocal\to\RR$, the inequality
\begin{equation}\label{KoksmaGeneral}
\left|\frac{1}{|X|}\sum_{x\in X}f(x)-\int_\Ocal f\,d\mu\right|\leq C(f)\Delta(X)
\end{equation}
holds, where $C(f)$ is a constant depending only on $f$.  
\end{Koksma}

In the well-developed study of equidistribution on the circle group $\RR/\ZZ$, the inequality analogous to (\ref{KoksmaGeneral}) is due to Koksma (see \cite{MR0419394} $\S$ 2.5), and the quantity $C(f)$ is (up to a constant) the real-analytic variation $V(f)=\int_{\RR/\ZZ}|f'(x)|\,dx$ of $f$.  Consequently, the real integral $\int_{\RR/\ZZ}f(x)\,dx$ can be approximated numerically using low-discrepancy sequences, an idea which forms the basis of quasi-Monte Carlo integration.  Niederreiter \cite{MR508447} has provided an exposition of quasi-Monte Carlo methods, and Morokoff-Caflisch \cite{MR1365433} have done an extensive experimental study of the effectiveness of quasi-Monte Carlo methods in evaluating integrals in single and multi-dimensions. 

In the non-Archimedean setting, there are multiple ways one might define a notion of the variation of a function $f:\Ocal\to\RR$.  In this paper we examine four possibilities for such a definition, with an emphasis on regularity properties of functions with finite variation, and also with an eye toward establishing Koksma inequalities of the form (\ref{KoksmaGeneral}).  

The first two notions of variation we consider were initiated by Taibleson \cite{MR217522} and Beer \cite{MR237441}, both in the 1960s.  Despite working at nearly the same time, these two authors seem to have been unaware of each other's work.  One of our goals is to give an overview of their ideas and to provide a comparison of their relative strengths and weaknesses.

Taibleson's version of variation, which we consider in $\S$~\ref{TaibVarSection}, has perhaps the simplest and most elegant definition.  One considers any partition of $\Ocal$ into a finite collection discs, next sums the maximum differences in the values taken by $f$ in each disc, and finally takes the supremum over all such partitions.  This variation is typically the easiest to calculate in specific examples, and it is translation invariant with respect to the group structure on $\Ocal$.  Moreover, functions with finite Taibleson variation have at most countably many discontinuities, as we show in $\S$~\ref{TaibVarSection}, and they satisfy a decay condition on their Fourier coefficients, as was shown by Taibleson \cite{MR217522}.

Taibleson variation is majorized by all three of the other notions of variation we consider, but no general inequalities exist in the opposite direction, which may be an indication that Taibleson variation is a rather coarse measure of irregularity.  Indeed, we will see examples in which Taibleson variation fails to detect a certain type of oscillation which the other notions of variation are sensitive to.  Because of this lack of sensitivity to oscillation, we can show that it is actually impossible to prove a Koksma inequality of the form (\ref{KoksmaGeneral}) in which the constant $C(f)$ depends only on the Taibleson variation of $f$.  Thus for the purposes of establishing a Koksma inequality, a different idea is needed.

In $\S$~\ref{BeerVarSection} we describe the approach of Beer, who was the first to prove a Koksma inequality of the form (\ref{KoksmaGeneral}).  Beer's construction involves selecting a certain dictionary ordering on $\Ocal$ which allows one to (essentially) identify $\Ocal$ with the real unit interval $[0,1]$ using base $q$ expansions of real numbers.  Then one can use a real analytic argument to emulate the proof of the classical Koksma inequality.  One drawback of Beer's approach is that the value of the Beer variation of a function is not absolute, but rather it depends on the arbitrary choice of an ordering on $\Ocal$.  Moreover, Beer variation is not translation invariant, which is unfortunate because $\Ocal$ is a group.  

In $\S$~\ref{BerkVarSection} we define a new notion of variation, which eliminates the drawbacks suffered by both the Beer and Taibleson variations.  This new notion may be interpreted as the graph-theoretic variation when a function $f:\Ocal\to\RR$ is naturally extended to a function on a certain subtree of the Berkovich affine line associated to the local field $K$.  Like Taibleson variation, this Berkovich-analytic variation is order-free and translation invariant.  But we are also able to prove a Koksma inequality for Berkovich-analytic variation, in contrast to Taibleson variation, for which no Koksma inequality is possible.  We also prove a strong regularity condition for functions of finite Berkovich-analytic variation, showing that every such function is equal almost everywhere to a continuous function.

In $\S$~\ref{FourierVarSection} we give a fourth approach, proving a Koksma inequality of the type (\ref{KoksmaGeneral}) using Fourier analysis on the compact group $\Ocal$.  We declare the constant $C(f)$ that arises in this inequality the Fourier-analytic variation of $f$; roughly speaking, this constant is finite when the Fourier coefficients of $f$ decay rapidly enough.  Like Berkovich analytic variation, the Fourier-analytic variation of a function is always larger than the Taibleson variation, and functions with finite Fourier-analytic variation satisfy a strong regularity property.

In $\S$~\ref{ComparingSection}, we summarize how the three known non-Archimedean Koksma inequalities compare with one another for the sample application $f(x)=|x-c|^t$ for $c\in\Ocal$ and $t>0$.  In this family, our Berkovich-analytic Koksma inequality is always sharper than Beer's Koksma inequality.  Our Fourier-analytic Koksma inequality is sharper than Beer's result for large $t$, and sharper than both Beer's result and our Berkovich-analytic result as $t\to0$.  

Niederreiter \cite{MR249370} has derived a general Koksma inequality on compact abelian groups, as well as a Fourier analytic Koksma inequality on $\RR/\ZZ$. Niederreiter's approach does not involve any notion of discrepancy defined directly on the group $G$, but instead considers the distribution of points on the unit circle under the character maps from $G$ to the unit circle. 

We thank David Finch for helpful suggestions about this work.

\section{Review of Local fields}\label{PrelimSection}

Throughout this paper $K$ is a field which is locally compact with respect to a nontrivial, non-Archimedean absolute value $|\cdot|$, and $\Ocal=\{x\in K\mid |x|\leq1\}$ is its compact ring of integers.  Let $\pi\in \Ocal$ be a uniformizing parameter; this means that $|\pi|$ is maximal among all $x\in \Ocal$ with $|x|<1$, and $\pi \Ocal$ is the unique maximal ideal of $\Ocal$.  Because $K$ is assumed to be locally compact, the residue field $\Ocal/\pi \Ocal$ must be finite, and we denote its order by $q$.  We may assume without loss of generality that the absolute value $|\cdot|$ is normalized so that $|\pi|=1/q$.  

Given an element $a\in K$ and a real number $r>0$ with $r\in q^{\ZZ}$, denote by
\begin{equation*}
D_r(a) = \{x\in K\,\mid\,|x-a|\leq r\}
\end{equation*} 
the closed disc in $K$ with center $a$ and radius $r$.  Letting $\mu$ denote the Haar measure on $\Ocal$, normalized so that $\mu(\Ocal)=1$, the normalization of the absolute value implies that the Haar measure of a disc is the same as its radius; that is $\mu(D_r(a))=r$.

It is useful to fix a complete set $S$ of coset representatives in $\Ocal$ for the residue field $\Ocal/\pi \Ocal$, with the assumption that $0\in S$.  As is well-known, each element $x\in \Ocal$ can be written uniquely as a power series $x=\sum_{k\geq0}a_k\pi^k$ in $\pi$, for $a_k\in S$; see \cite{MR1680912} $\S$ 4.2.

If $A$ is a subset of $\Ocal$, we denote by $\Xcal_A(x)$ the characteristic function of $A$.

\section{Taibleson variation}\label{TaibVarSection}

The first notion of variation we consider, due to Taibleson \cite{MR217522}, has the simplest definition.  Taibleson was considered only the case of functions defined on the formal power series ring $\Ocal=\FF_p[\![T]\!]$ over a finite field $\FF_p$, but extending this definition to our more general setting is straightforward.

By a {\bf Taibleson partition} of $\Ocal$ we mean any finite collection $\Pi$ of discs which form a partition of $\Ocal$.  Given a function $f: \Ocal \to \RR$ and a Taibleson partition $\Pi$ of $\Ocal$, define
\begin{equation*}   
\begin{split}
 V_\Pi(f) & = \sum_{D\in \Pi}\sup_{x,y \in D} (f(x) - f(y)).
\end{split}
\end{equation*}
Define the {\bf Taibleson variation} of $f$ by $V_\Taib(f) = \sup_\Pi V_\Pi(f)$, the supremum over all Taibleson partitions of $\Ocal$.

\begin{ex}
Consider the characteristic function $f=\Xcal_{A}:\Ocal\to\RR$ of a proper subdisc $A\subsetneq \Ocal$. We show that $V_\Taib(f) =1$.  By taking the partition to be just $\Pi = \{ \Ocal \}$, we see that $V_\Pi(f) = 1$.  For any other partition $\Pi$, note that either $A$ is strictly contained in some $D\in\Pi$, or some $D\in \Pi$ is contained in $A$.  If $A$ is strictly contained in some $D\in\Pi$, then $V_\Pi(f) = 1$, but if $A$ contains some $D\in\Pi$, then $f$ is constant on each $D\in \Pi$ and so $V_\Pi(f) = 0$.  We conclude that  $V_\Taib(f) = 1$.  
\end{ex}

\begin{ex} 
Let $c\in \Ocal$, let $t>0$, and consider the function $f:\Ocal\to\RR$ defined by $f(x) = |x-c|^t$.  We will show that $V_\Taib(f) =1$.  By taking the partition to be just $\Pi = \{ \Ocal \}$, we see that $V_\Pi(f) = 1$.  For any other partition $\Pi$, let $D_c$ be the disc containing $c$, and suppose that $D_c$ has radius $r$.  Then $f$ is constant on each $D\neq D_0$, and it follows that $V_\Pi(f) \leq r^t\leq 1$.  We conclude that $V_\Taib(f)=1$.
\end{ex}

\begin{prop}\label{TaibContinuousProp}
If a function $f:\Ocal\to\RR$ has finite Taibleson variation, then there exists a countable subset $Z$ of $\Ocal$ such that $f$ is continuous at every point in $\Ocal\setminus Z$.
\end{prop}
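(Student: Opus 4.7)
The plan is to control the discontinuity set by quantifying oscillation. For each point $x\in\Ocal$ define the oscillation
\begin{equation*}
\omega(f,x)=\lim_{r\to 0^+}\sup_{y,z\in D_r(x)}\bigl(f(y)-f(z)\bigr),
\end{equation*}
which exists because the supremum is monotone in $r$. Since non-Archimedean discs form a neighborhood base at $x$, and $\sup_{y,z\in D}(f(y)-f(z))=\sup_{y,z\in D}|f(y)-f(z)|$, the function $f$ is continuous at $x$ if and only if $\omega(f,x)=0$. The first step is to record this equivalence.

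The next step is to stratify the discontinuity set. For each positive integer $n$, set
\begin{equation*}
Z_n=\bigl\{x\in\Ocal\ \big|\ \omega(f,x)\geq 1/n\bigr\},
\end{equation*}
so that the set of discontinuities of $f$ is exactly $Z=\bigcup_{n\geq 1}Z_n$. I will show that each $Z_n$ is finite—indeed $|Z_n|\leq n\,V_\Taib(f)$—from which it follows immediately that $Z$ is countable.

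The core estimate is the following. Suppose $x_1,\dots,x_k$ are distinct points in $Z_n$. Because any two distinct points of $\Ocal$ lie in disjoint discs once the radius is small enough, and because the discs of a common radius $q^{-m}$ partition $\Ocal$, we can choose $m$ large enough that the discs $D_i:=D_{q^{-m}}(x_i)$ are pairwise distinct, and then extend these to a Taibleson partition $\Pi$ of $\Ocal$ by adjoining the remaining discs of radius $q^{-m}$. Since $x_i\in Z_n$, the monotonicity that defines $\omega$ forces $\sup_{y,z\in D_i}(f(y)-f(z))\geq\omega(f,x_i)\geq 1/n$ for every $i$. Summing the contributions of the $D_i$ (the remaining terms are nonnegative) gives
\begin{equation*}
V_\Taib(f)\geq V_\Pi(f)\geq \sum_{i=1}^k\sup_{y,z\in D_i}\bigl(f(y)-f(z)\bigr)\geq \frac{k}{n},
\end{equation*}
so $k\leq n\,V_\Taib(f)$, as claimed.

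No step here looks genuinely hard; the only point to handle carefully is the construction of the partition extending the chosen disjoint discs, which uses only the tree structure of the ultrametric (discs are either disjoint or nested, and discs of a fixed radius tile $\Ocal$). Once each $Z_n$ is known to be finite, the set $Z=\bigcup_n Z_n$ is a countable union of finite sets, and $f$ is continuous on $\Ocal\setminus Z$.
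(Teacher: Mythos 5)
Your proof is correct and follows essentially the same strategy as the paper's: quantify the oscillation at each point, observe that $M$ points with oscillation at least $\epsilon$ can be separated into distinct discs of a Taibleson partition, and conclude that the variation majorizes $\epsilon M$. The only difference is cosmetic — you argue directly that each $Z_n$ is finite (with the explicit bound $|Z_n|\leq n\,V_\Taib(f)$), whereas the paper runs the same estimate as a proof by contradiction starting from an uncountable discontinuity set.
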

\begin{proof}
For each $x\in\Ocal$, define $\phi(x)=\limsup_{y\to x}|f(y)-f(x)|$, and note that $f$ is continuous precisely at those $x\in\Ocal$ with $\phi(x)=0$.  Assume that the set $Z=\{x\in\Ocal\mid \phi(x)>0\}$ is uncountable.  Then there exists $\epsilon>0$ and an infinite sequence $\{x_m\}_{m=1}^\infty$ of distinct points in $Z$ with $\phi(x_m)\geq\epsilon$ for all $m$.  (If no such $\epsilon>0$ existed, then $Z=\cup_{k\geq1}\{x\in \Ocal\mid\phi(x)\geq1/k\}$ would be countable.) For each $M\geq1$, let $\Pi$ be a Taibleson partition with the property that, for each $1\leq m\leq M$, each of the points $x_1,\dots,x_M$ are in distinct discs of the partition. Then $V_\Pi(f)\geq\epsilon M$ and hence $V_\Taib(f)=+\infty$.  
\end{proof}

\begin{ex}\label{InfiniteTaibVariation}
This example shows that the converse of Proposition~\ref{TaibContinuousProp} is false, by constructing a continuous function $f:\Ocal\to\RR$ with $V_\Taib(f)=+\infty$.  Let $\{c_m\}_{m=1}^{\infty}$ be an infinite sequence of nonzero points in $\Ocal$ with $c_m\to0$, and let $\{D_m\}_{m=1}^{\infty}$ be a sequence of disjoint discs in $\Ocal$ with $c_m\in D_m$ and $0\not\in D_m$ for all $m$.  Let $f:\Ocal\to\RR$ be a function which is locally constant on each disc $D_m$, and which takes exactly two values on each $D_m$, the values $0$ and $1/m$.  Next define $f(x)=0$ for all $x\in\Ocal\setminus(\cup_mD_m)$.  Then for each $M\geq1$ one can construct a Taibleson partition $\Pi$ containing the discs $D_1,\dots,D_M$ (and some other discs), and 
\[
V_\Pi(f)\geq1+1/2+1/3+\dots+1/M.
\]
It follows that $V_\Taib(f)=+\infty$ by the divergence of the harmonic series.
\end{ex}

Despite the previous example, the following partial converse of Proposition~\ref{TaibContinuousProp} for Lipschitz functions is available.

\begin{prop}\label{TaibTipContinuousProp}
If a function $f:\Ocal\to\RR$ is Lipschitz continuous with constant $C\geq0$, then $V_\Taib(f)\leq C$.
\end{prop}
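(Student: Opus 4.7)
The proof is a direct application of the Lipschitz hypothesis combined with the key non-Archimedean fact, noted in Section~\ref{PrelimSection}, that the Haar measure of a disc equals its radius.

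The plan is to fix an arbitrary Taibleson partition $\Pi$ of $\Ocal$ and bound $V_\Pi(f)$ by $C$ uniformly. For each disc $D\in\Pi$ of radius $r_D$, any two points $x,y\in D$ satisfy $|x-y|\leq r_D$ by definition of a disc, so the Lipschitz hypothesis gives
\begin{equation*}
f(x)-f(y) \;\leq\; |f(x)-f(y)| \;\leq\; C|x-y| \;\leq\; Cr_D.
\end{equation*}
Taking the supremum over $x,y\in D$ yields $\sup_{x,y\in D}(f(x)-f(y))\leq C r_D = C\mu(D)$, using $\mu(D)=r_D$.

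Summing over all discs in the partition, and using that $\Pi$ is a partition of $\Ocal$, I get
\begin{equation*}
V_\Pi(f) \;=\; \sum_{D\in\Pi}\sup_{x,y\in D}(f(x)-f(y)) \;\leq\; C\sum_{D\in\Pi}\mu(D) \;=\; C\mu(\Ocal) \;=\; C.
\end{equation*}
Since this bound holds for every Taibleson partition $\Pi$, taking the supremum gives $V_\Taib(f)\leq C$.

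There is no real obstacle here; the argument is essentially one line, and its simplicity is a consequence of the fact that in the non-Archimedean setting the diameter of a disc coincides with its radius and with its measure, so Lipschitz regularity translates directly into a telescoping measure-theoretic bound.
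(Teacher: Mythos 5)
Your proof is correct and is essentially identical to the paper's: both bound $\sup_{x,y\in D}(f(x)-f(y))$ by $C\mu(D)$ using the ultrametric fact that the diameter of a disc equals its radius (which equals its Haar measure), and then sum over the partition using $\sum_{D\in\Pi}\mu(D)=1$. Nothing further is needed.
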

\begin{proof}
By hypothesis $|f(x)-f(x)|\leq C|x-y|$ for all $x,y\in\Ocal$.  If $D\subseteq\Ocal$ is a disc, then its radius is $\mu(D)$, and so $\sup_{x,y\in D}|f(x)-f(y)|\leq C\mu(D)$.  It follows that if $\Pi$ is any Taibleson partition of $\Ocal$, we have $V_\Pi(f)\leq C$ because $\sum_{D\in\Pi}\mu(D)=\mu(\Ocal)=1$.  We conclude that $V_\Taib(f)\leq C$.
\end{proof}

\begin{ex}\label{AlternatingExample}
In this example we construct a function $f:\Ocal\to\RR$ which is continuous, but not Lipschitz continuous, and which has finite Taibleson variation.  

For each $k\geq1$, let $A_k=D_{1/q^{k+1}}(\pi^k)$.  Every $x\in A_k$ satisfies $|x|=1/q^k$ and hence the $A_k$ are pairwise disjoint.  Define a function $f:\Ocal\to\RR$ by
\begin{equation}\label{AlternatingExampleFunction}
f(x)=\sum_{k\geq0}\frac{(-1)^{k}}{k+1}\Xcal_{A_k}(x).
\end{equation}

We show that $V_\Taib(f)=1$.  To see this, we first claim that if $D\subseteq\Ocal$ is any disc which does not contain $0$, then $f$ is constant on $D$.  This is clearly the case if $D$ is either contained in some $A_k$ or is disjoint from all of the $A_k$, so the only case left to check is when $D$ properly contains some $A_k$.  But if $D$ properly contains $A_k$, then the radius of $D$ is at least $1/q^k$ and $D$ contains $\pi^k$, whereby $0\in D_{1/q^k}(\pi^k)\subseteq D$, contradicting the asumption that $0\not\in D$.

Since $f$ is constant on any disc which does not contain $0$, such discs cannot contribute to the variation $V_\Pi(f)$ associated to any Taibleson partition $\Pi$ of $\Ocal$.  If $\Pi$ is a Taibleson partition of $\Ocal$ and $D_0\in\Pi$ denotes the disc containing $0$, then 
\[
V_\Pi(f)=\sup_{x,y\in D_0}|f(x)-f(y)|\leq 1,
\]
because all partial sums of the alternating harmonic series $\sum_{k\geq0}(-1)^{k}/(k+1)$ are in the interval $[0,1]$.  Choosing $\Pi=\{\Ocal\}$ shows that $V_\Pi(f)=1$ can be achieved, and therefore $V_\Taib(f)=1$.

We note that $f$ is locally constant except at $0$, and it is continuous at $0$ by the convergence of the alternating harmonic series.  To see that $f$ is not Lipschitz continuous, if $|x|=1/q^m$ then $|f(0)-f(x)|$ is the error term in the alternating harmonic series $\sum_{0\leq k\leq m}(-1)^{k}/(k+1)$, which is $\approx\frac{1}{m}$, and thus no bound of the form $|f(0)-f(x)|\leq C|x|=C/q^m$ is possible.
\end{ex}

In the previous example, the fact that discs not containing zero cannot contribute to the Taibleson variation leaves one with the impression that Taibleson variation is too coarse to detect the kind of oscillation exhibited by alternating sums of the type (\ref{AlternatingExampleFunction}).  Indeed, in $\S$~\ref{BerkVarSection} we will show that $f$ has infinite Berkovich-analytic variation.  

Our final result in this section is the following theorem, which shows that it is not possible to prove a Koksma inequality of the form (\ref{KoksmaGeneral}) in which the constant $C(f)$ depends only on the Taibleson variation $V_\Taib(f)$.  Once again the culprit is the lack of sensitivity of Taibleson variation to oscillation.

\begin{thm}\label{NoKoksmaThm}
For each integer $M\geq1$, there exists a locally constant function $f:\Ocal\to\RR$ and a finite subset $X$ of $\Ocal$ such that $V_\Taib(f) = 2$ and 
\[
\left|\frac{1}{|X|}\sum_{x\in X}f(x)-\int_\Ocal f\,d\mu\right| \geq 2M\Delta(X)
\]
\end{thm}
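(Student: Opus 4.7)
The plan is to extend the alternating construction of Example~\ref{AlternatingExample} to a finite version and pair it with a structured, nearly-equidistributed sample set. Define
\[
f = \sum_{k=0}^{2M-1} (-1)^k \Xcal_{A_k},
\quad A_k = D_{q^{-k-1}}(\pi^k).
\]
The same ultrametric argument as in Example~\ref{AlternatingExample} gives $V_\Taib(f) = 2$: for any Taibleson partition $\Pi$, any disc not containing $0$ is either contained in a single $A_k$ or disjoint from all of them and so contributes $0$; the single $0$-containing disc in $\Pi$ has $f$-values in $\{-1,0,+1\}$ and contributes at most $2$, with equality when $\Pi=\{\Ocal\}$.

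For $X$, I would set $n = 2M$ and let $X_0$ consist of one representative from each coset of $\pi^n \Ocal$ in $\Ocal$, so $|X_0| = q^n$ and $X_0$ is perfectly balanced on every disc of radius at least $q^{-n}$. For each $j = 0, 1, \ldots, M-1$ pick a radius-$q^{-n}$ grid disc $E_j \subseteq A_{2j+1}$ and a radius-$q^{-n}$ grid disc $E_j' \subseteq A_{2j}$, and form $X$ from $X_0$ by deleting the representative in $E_j$ and inserting a replacement in $E_j'$ placed in a radius-$q^{-n-1}$ sub-disc distinct from the $X_0$-representative already there. Since $X_0$ gives a Riemann sum equal to $\int f\,d\mu$, and each of the $M$ moves replaces an $f$-value of $-1$ by $+1$, we obtain the exact identity
\[
\frac{1}{|X|}\sum_{x \in X} f(x) - \int_\Ocal f\,d\mu \;=\; \frac{2M}{q^n}.
\]

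The main work is bounding the discrepancy. The key ultrametric fact from Example~\ref{AlternatingExample} --- that any disc containing both $A_{2j}$ and $A_{2j+1}$ already contains $0$ --- forces the only $0$-centered disc affected by the $j$-th move to be $D_{q^{-(2j+1)}}(0)$, and these discs are distinct for different $j$. Likewise, any disc strictly inside a single $A_k$, or disjoint from every $A_k$, is influenced by at most one move, and the disjoint-sub-disc placement inside $E_j'$ prevents sub-grid accumulation. A case-by-case check then shows $\bigl||X \cap D|/|X| - \mu(D)\bigr| \leq q^{-n}$ for every disc $D$, with equality attained (e.g. at $D = A_k$), so $\Delta(X) = q^{-n} = 1/|X|$ and the stated bound holds with equality.

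The main obstacle is the final bookkeeping: verifying that no single disc accumulates more than one unit of net perturbation. The tree structure of the $A_k$'s is what makes this tractable, since it localizes each move to one branch out of $0$ and pairs the $+1$ and $-1$ perturbations along the branch pair $(A_{2j}, A_{2j+1})$ so that every disc coarse enough to see both branches sees cancellation.
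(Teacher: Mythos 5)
Your proposal is correct and follows essentially the same route as the paper: the same truncated alternating function $f=\sum_{k=0}^{2M-1}(-1)^k\Xcal_{A_k}$, and the same perturbation of a perfectly balanced grid set by adding one point to each even-indexed $A_k$ and deleting one from each odd-indexed $A_k$, with the discrepancy bound $\Delta(X)\leq q^{-n}$ verified by the same disc-by-disc case analysis (the paper just writes the added/deleted points explicitly as $\pi^k+\pi^T$ and $\pi^k$ with $T\geq 2M$). The bookkeeping you defer is exactly the case check the paper carries out, and it goes through as you sketch it.
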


\begin{proof}
For each $k\geq 0$, let 
\begin{equation*}
\begin{split}
A_k = D_{1/q^{k+1}}(\pi^k) = \{\pi^k+a_{k+1}\pi^{k+1}+a_{k+2}\pi^{k+2}+\dots\mid a_i\in S\} 
\end{split}
\end{equation*}
Every $x\in A_k$ satisfies $|x|=1/q^k$ and hence the $A_k$ are pairwise disjoint.  Define $f:\Ocal\to\RR$ by
\[
f(x) = \sum_{0\leq k\leq 2M-1}(-1)^{k}\Xcal_{A_{k}}(x).
\]
Note that $f$ is a slight variant of the function considered in Example~\ref{AlternatingExample}.  By the same argument used in Example~\ref{AlternatingExample}, because $f$ is constant on any disc which does not contain $0$, we have $V_\Taib(f)=2$.  

Our strategy is to construct a set $X$ which has very small discrepancy, except that $X\cap A_k$ has one ``extra'' point when $k$ is even, and one ``missing'' point when $k$ is odd.  In this way the set $X$ will take advantage of the oscillation built into the function $f$.  Let $T\geq 2M$ be an integer, and define
\[
Y=\{a_0+a_1\pi+\dots+a_{T-1}\pi^{T-1}\in\Ocal\mid a_i\in S\},
\]
thus $|Y|=q^T$.  Next define
\[
X=Y\cup\{1+\pi^T,\pi^2+\pi^T,\dots,\pi^{2M-2}+\pi^T\}\setminus\{\pi,\pi^3,\dots,\pi^{2M-1}\}.
\]
Thus $|X|=|Y|=q^T$, but $X$ differs from $Y$ in that one point $\pi^k+\pi^T$ has been added to $X$ in each of the $M$ discs $A_0,A_2,\dots,A_{2M-2}$ with even indices, and one point $\pi^k$ has been removed from $X$ in each of the $M$ discs $A_1,A_3,\dots,A_{2M-1}$ with odd indices.  Explicitly, 
\begin{equation}\label{AlternatingSetSizes}
|X\cap A_k|=q^{T-(k+1)}+(-1)^k.
\end{equation}

We will show that $\Delta(X)\leq 1/q^T$.  Thus we must show that 
\begin{equation}\label{CounterExampleDiscrepBound}
\left|\frac{|X\cap D|}{|X|}-\mu(D)\right| \leq 1/q^T
\end{equation}
for all discs $D\subseteq \Ocal$.  Suppose that $D$ has radius $\mu(D)=1/q^n$.

We first consider the case $n\geq T+1$. Then $D$ can contain at most one point of $X$, as any two distinct points $x,x'\in X$ satisfy $|x-x'|\geq 1/q^{T}>1/q^n$.  Thus $|X\cap D|=\theta$ where $\theta$ is either $0$ or $1$, and $\frac{|X\cap D|}{|X|}-\mu(D)=\frac{\theta-q^{T-n}}{q^T}$, and (\ref{CounterExampleDiscrepBound}) holds.  If instead $n=T$, then $D$ can contain at most two points of $X$, and $D$ contains two points of $X$ only when these two points are a pair $\pi^k$ and $\pi^k+\pi^T$ for even $k$.  Thus $|X\cap D|=\theta$ where $\theta$ is either $0,1$ or $2$, and $\frac{|X\cap D|}{|X|}-\mu(D)=\frac{\theta-1}{q^T}$, and (\ref{CounterExampleDiscrepBound}) holds.

Finally we consider the case $0\leq n<T$.  By an argument described in Example~\ref{AlternatingExample}, at least one of the following cases must hold: either (i) $D\subseteq A_k$ for some $k=0,1,2,\dots,2M-1$, or (ii) $D$ is disjoint from all of the discs $A_0,A_1,A_2,\dots,A_{2M-1}$, or (iii) $D$ contains $0$.

First, if $D\subseteq A_k$ for some $k=0,1,2,\dots,2M-1$, then $|X\cap D|=q^{T-n}+\theta$, where $\theta\in\{-1,0,1\}$.  Note that $\theta=1$ can only occur when $k$ is even, due to the presence of the ``extra point'' $\pi^k+\pi^T$ in $A_k$, while $\theta=-1$ can only occur when $k$ is odd, due to the absence of the point $\pi^k$, which has been removed from $X$.  We then have $\frac{|X\cap D|}{|X|}-\mu(D)=\frac{q^{T-n}+\theta}{q^T}-\frac{1}{q^n}=\frac{\theta}{q^T}$, and (\ref{CounterExampleDiscrepBound}) holds.  If $D$ is disjoint from all of the discs $A_0,A_1,A_2,\dots,A_{2M-1}$, then $|X\cap D|=q^{T-n}$ and (\ref{CounterExampleDiscrepBound}) holds in this case as well.  

Finally, assume that $D$ contains $0$, thus $D=D_{1/q^n}(0)$.  If $n>2M-1$ then $D$ is disjoint from all of the discs $A_0,A_1,A_2,\dots,A_{2M-1}$, a case which has already been treated.  If $0\leq n\leq 2M-1$, then $D$ contains the discs $A_n,A_{n+1},\dots,A_{2M-1}$ and is disjoint from the discs $A_0,A_{1},\dots,A_{n-1}$.  Therefore $|X\cap D| = q^{T-n}+\theta$, where $\theta$ is the number of even indices $k\in[n,2M-1]$ minus the number of odd indices $k\in[n,2M-1]$.  Thus $\theta\in\{-1,0\}$ and once again (\ref{CounterExampleDiscrepBound}) holds, completing the proof that $\Delta(X)\leq 1/q^T$.

Finally, we use (\ref{AlternatingSetSizes}) to calculate 
\begin{equation*}
\begin{split}
\frac{1}{|X|}\sum_{x\in X}f(x)-\int_\Ocal f\,d\mu & = \frac{1}{q^T}\sum_{k=0}^{2M-1}(-1)^{k}|X\cap A_k|-\sum_{k=0}^{2M-1}(-1)^{k}\mu(A_k) \\
	& = \frac{1}{q^T}\sum_{k=0}^{2M-1}(-1)^{k}(q^{T-(k+1)}+(-1)^k)-\sum_{k=0}^{2M-1}(-1)^{k}q^{-(k+1)} \\
	& = 2M/q^T \\
	& \geq 2M\Delta(X), 
\end{split}
\end{equation*}
completing the proof.
\end{proof}

Inspection of the proof of Theorem~\ref{NoKoksmaThm} shows that for arbitrary $M\geq1$, the sets $X$ satisfying the conclusion can be found with  arbitrarily small discrepancy $\Delta(X)$.  This is notable because the case of greatest interest in any Koksma inequality is the case of a sequence $\{X_n\}$ of equidistributed sets.

\section{Beer variation}\label{BeerVarSection}

In this section we describe Beer's notion \cite{MR237441} of the variation of a function $f:\Ocal\to\RR$.  Recall from $\S$~\ref{PrelimSection} that, given any complete set $S$ of coset representatives in $\Ocal$ for the residue field $\Ocal/\pi \Ocal$, with $0\in S$, each element $x\in \Ocal$ can be written uniquely as a power series in $\pi$ with coefficients in $S$.  In order to define Beer variation, we need to fix once and for all an order on the set $S$, and we do so by indexing the elements of $S$ as
\[
S=\{s_0,s_1,s_2,\dots,s_{q-1}\}.
\] 
Here we are generalizing the construction of Beer, who formulated her definition only in the case $\Ocal=\ZZ_p$ and $S=\{0,1,2,\dots,p-1\}$.

Fix a (large) positive integer $\lambda$, and let $m_0,m_1,m_2,\dots,m_{q^\lambda-1}$ be the list of all $q^\lambda$ elements of $\Ocal$ of the form 
\[
m_i=a_0+a_1\pi+a_2\pi^2+\dots+a_{\lambda-1}\pi^{\lambda-1},
\]
written in the dictionary ordering according to the coefficients $a_0,a_1,a_2,\dots,a_{\lambda-1}\in S$ and with respect to the ordering of the set $S$.  For each $i = 0,1,2,\dots,q^{\lambda}-1$, let $E_i = D_{1/q^{\lambda}}(m_i)$.  We call the collection of discs $E_0, E_1, E_2, \dots, E_{q^\lambda-1}$ the {\bf ordered Beer partition} of $\Ocal$ associated to $S$ and $\lambda$.  Given a function $f:\Ocal\to \RR$, let
\begin{equation} 
\label{eqn: vlambdaf}
V_{\lambda}(f) = \sup_{x_i\in E_i} \sum_{i=1}^{q^\lambda - 1} |f(x_i)-f(x_{i-1})|.
\end{equation} 
It is easy to see that $V_\lambda(f)$ is monotone increasing as a sequence in $\lambda$, since the ordered Beer partition associated to $\lambda+1$ is finer than that associated to $\lambda$.  The {\bf Beer variation} of $f$ is defined as
\begin{equation}
\label{eqn: Beer-bounded-variation}
V_\Beer(f) =  \lim_{\lambda\to+\infty} V_{\lambda}(f).
\end{equation}


The motivation for the dictionary ordering considered by Beer can be explained as follows. Given any $0\leq k \leq \lambda$ and $a_0,a_1,\dots,a_{k-1}\in S$, the disc with center $a = a_0 + a_1\pi +\dots + a_{k-1}\pi^{k-1}$ and radius $1/q^k$ in $\Ocal$ can be written as $D_{1/q^k}(a) = a + \pi^k \Ocal$.  Thus $D_{1/q^k}(a)$ is the set of all elements $x \in \Ocal$ whose first $k$ terms in its $\pi$-adic expansion begin with $a_0 + a_1\pi + a_2\pi^2 +\dots + a_{k-1}\pi^{k-1}$.  We conclude that any disc in $\Ocal$ is the union of a block of {\bf consecutive} discs $E_i$ in the dictionary ordered partition $E_0,E_1,E_2,\dots, E_{q^\lambda-1}$ of $\Ocal$.  

We observe that the point $\alpha=\sum_{k\geq0}s_0\pi^k=s_0/(1-\pi)$ is contained in the first disc $E_0$ of the ordered partition for all $\lambda\geq1$, and similarly $\beta=\sum_{k\geq0}s_{q-1}\pi^k=s_{q-1}/(1-\pi)$ is always contained in the last disc $E_{q^\lambda-1}$.  Consequently, as the following examples show, the behavior of a function $f:\Ocal\to \RR$ at the points $\alpha$ and $\beta$ has a strong influence on the evaluation of the Beer variation (in much the same way that the endpoints effect the classical real valuation of a function $f:[0,1]\to\RR$).  In particular, Beer variation depends on the choice of the ordered set $S$ of coset representatives for the quotient $\Ocal/\pi \Ocal$.

\begin{ex}
Consider the characteristic function $f=\Xcal_{A}:\Ocal\to\RR$ of a proper subdisc $A\subsetneq \Ocal$. We show that
\begin{equation} 
\label{eqn: Beer-variation-disc}
V_\Beer(f) = \begin{cases}
 1  & \text{ if either $\alpha\in A$ or $\beta\in A$} \\
 2  & \text{ if $\alpha\not\in A$ and $\beta\not\in A$.}
  \end{cases}  
\end{equation} 
(Note that the disc $A$ cannot contain both $\alpha$ and $\beta$ because of the assumption that $A\neq \Ocal$ together with the fact that $|\alpha-\beta|=1$.)  Fix $\lambda \geq 1$ large enough so that the radius of $A$ is $\geq1/q^{\lambda}$.  Then $A$ is the union of a block of consecutive discs $E_i$ in the ordered partition, but the block does not contain all of the discs $E_i$ because $A\neq \Ocal$.  If $\alpha\in A$ then this is an initial block $E_0,E_1,\dots E_n$, and it follows that $V_{\lambda}(f) = 1$.  Similarly, $V_{\lambda}(f) = 1$ if $\beta\in A$.  If neither $\alpha$ nor $\beta$ is in $A$, then $A$ is the union of a block of discs $E_i$ containing neither $E_0$ nor $E_{q^\lambda -1}$. Therefore, $V_{\lambda}(f) = 2$, and (\ref{eqn: Beer-variation-disc}) follows. 

This example is similar to the case of the classical real variation of the characteristic function of a proper subinterval $[a,b]$ of $[0, 1]$, which is equal to $1$ if $a=0$ or $b=1$, and is equal to $2$ if $0<a<b<1$.
\end{ex}

\begin{ex}\label{BeerPoweringExample}
Let $c\in \Ocal$, let $t>0$, and consider the function $f(x) = |x-c|^t$.  We show that 
\[
V_\Beer(f)=|\alpha-c|^t+|\beta-c|^t.
\]
In particular $1\leq V_\Beer(f) \leq2$, with $V_\Beer(f)=1$ if and only if $c=\alpha$ or $c=\beta$.

We first consider the case that $c=\alpha$.  Note that $f(x)=|x-\alpha|^t$ is constant on every disc in $\Ocal$ that does not contain $\alpha$.  For fixed large $\lambda$, in the ordered partition $E_0,E_1,\dots,E_{q^\lambda-1}$ associated to $\lambda$, we can group the discs $E_i$ into $\lambda+1$ blocks, where $f(x)=|x-\alpha|^t$ is constant on each block, except the first block which contains only $E_0$. Thus the sum occurring in the definition of $V_\Beer(f)$ given by (\ref{eqn: vlambdaf}) is maximized when $x_0=\alpha$, and the choices of $x_1, x_2,\dots,x_{q^\lambda -1}$ in their respective discs $E_i$ are arbitrary. Since $0=f(x_0)\leq f(x_1)\leq\dots\leq f(x_{q^\lambda-1})=1$, it follows that
\begin{equation*}
V_\lambda(f) = \sum_{i=1}^{q^\lambda -1} |f(x_i) - f(x_{i-1})| =1.
\end{equation*} 
and we conclude that $V_\Beer(f) = 1$.  The proof in the case $c=\beta$ is similar.

Assume that $c\neq\alpha$ and $c\neq\beta$.  Then $f(x)=|x-c|^t$ takes a constant value $C_i$ on each partition disc $E_i$ except the one containing $c$, call it $E_{i_0}$.  The sequence $C_0,C_1,\dots,C_{i_0-1}$ starts at $|\alpha-c|^t$ and is monotone decreasing, the sequence $C_{i_0+1},C_{i_0+2},\dots,C_{q^\lambda-1}$ is monotone increasing and ends at $|\beta-c|^t$, and thus the sum occurring in the definition of $V_\Beer(f)$ given by (\ref{eqn: vlambdaf}) is maximized when $x_{i_0}=0$, and the choices of the other $x_i$ in their respective discs $E_i$ are arbitrary.  A calculation similar to the one above shows that $V_\Beer(f)=|\alpha-c|^t+|\beta-c|^t$.

Again we point out the similarity of this example to the classical real valuation of the function $f:[0,1]\to\RR$ defined by $f(x)=|x-c|^t$ for $c\in[0,1]$, which is equal to $|c|^t+|1-c|^t$.
\end{ex}

\begin{prop}\label{TaiblesonLeqBeerProp}
For any $f: \Ocal \to \RR$, we have $V_\Taib(f) \leq V_\Beer(f)$.
\end{prop}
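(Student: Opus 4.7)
The plan is to take an arbitrary Taibleson partition $\Pi$ of $\Ocal$ and show directly that $V_\Pi(f) \leq V_\Beer(f)$, after which taking the supremum over $\Pi$ yields the result. We may clearly assume $V_\Beer(f)<\infty$, since otherwise the inequality is trivial.

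First I would fix an arbitrary $\epsilon>0$, and for each of the (finitely many) discs $D_j \in \Pi$, choose points $\hat{x}_j, \hat{y}_j \in D_j$ satisfying
\[
f(\hat{x}_j) - f(\hat{y}_j) \geq \sup_{x,y\in D_j}\bigl(f(x)-f(y)\bigr) - \epsilon/|\Pi|.
\]
Since each sup is nonnegative, one obtains $|f(\hat{x}_j) - f(\hat{y}_j)| \geq \sup_{x,y\in D_j}(f(x)-f(y)) - \epsilon/|\Pi|$ in all cases.

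Next I would invoke the key geometric fact stressed earlier in the section: any disc in $\Ocal$ is a union of a block of \emph{consecutive} discs $E_i$ in the ordered Beer partition associated to $\lambda$, once $\lambda$ is sufficiently large. I would choose $\lambda$ large enough that (i) each $D_j \in \Pi$ is a union of consecutive discs $E_{a_j}, E_{a_j+1},\ldots,E_{b_j}$, and (ii) whenever $\hat{x}_j \neq \hat{y}_j$, these two points lie in distinct discs $E_{i_j}, E_{i'_j}$ (possible because $1/q^\lambda$ eventually drops below $\min_j |\hat{x}_j - \hat{y}_j|$ taken over the finitely many $j$ with $\hat x_j \neq \hat y_j$). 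Then I would define a selection $x_0,\ldots,x_{q^\lambda-1}$ with $x_i \in E_i$ by setting $x_{i_j} = \hat{x}_j$ and $x_{i'_j} = \hat{y}_j$ for each $j$, and choosing the remaining $x_i$ arbitrarily in $E_i$.

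The rest is a telescoping argument. For each $j$, the triangle inequality gives
\[
|f(\hat{x}_j) - f(\hat{y}_j)| \leq \sum_{i=\min(i_j,i'_j)+1}^{\max(i_j,i'_j)} |f(x_i)-f(x_{i-1})|,
\]
and since the index ranges associated to different $D_j$'s lie in disjoint blocks $[a_j,b_j]$, summing over $j$ produces at most the full telescoping sum:
\[
\sum_{j} |f(\hat{x}_j) - f(\hat{y}_j)| \leq \sum_{i=1}^{q^\lambda -1} |f(x_i)-f(x_{i-1})| \leq V_\lambda(f) \leq V_\Beer(f).
\]
Combining this with the approximation inequality from the first step yields $V_\Pi(f) \leq V_\Beer(f) + \epsilon$, and letting $\epsilon \to 0$ and then taking the supremum over Taibleson partitions $\Pi$ completes the proof.

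The only mildly delicate point is item (ii) in the choice of $\lambda$: one must separate $\hat x_j$ from $\hat y_j$ into different $E_i$'s so that the single choice of $x_i$ per disc does not conflict. This is the step that requires the finiteness of $\Pi$ in an essential way, but it presents no real obstacle.
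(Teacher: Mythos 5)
Your proposal is correct and follows essentially the same route as the paper's proof: both rest on the fact that each Taibleson disc is a union of consecutive discs $E_i$ in the ordered Beer partition, choose $\lambda$ large enough to separate the two test points of each disc into distinct $E_i$'s, and then telescope over disjoint blocks of indices. The only cosmetic difference is that you use an $\epsilon/|\Pi|$-approximation to each supremum up front, whereas the paper works with arbitrary pairs $\alpha_D,\beta_D$ and takes the supremum at the end.
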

\begin{proof}
Let $\Pi$ be a Taibleson partition of $\Ocal$. For each disc $D\in\Pi$, consider arbitrary $\alpha_D,\beta_D\in D$ with $\alpha_D\neq \beta_D$, and without loss of generality assume that $f(\beta_D)\leq f(\alpha_D)$.

We produce a corresponding Beer partition as follows. Take $\lambda \geq 1$ large enough so that al of the discs in the Taibleson partition has radius at least $1/q^\lambda$, and $\alpha_D\not\equiv\beta_D \pmod{\pi^\lambda}$ for all $D\in\Pi$. Let $E_0,E_1,\dots ,E_{q^\lambda-1}$ be the ordered Beer partition of $\Ocal$ associated to $\lambda$.  

For each disc $D\in\Pi$, by our choice of $\lambda$ we know that $D$ is a union of consecutive discs $E_s, E_{s+1},\dots,E_{t}$ in the Beer partition.  The point $\alpha_D$ is in one of these discs, say $E_a$, and $\beta_D$ is in another disc, say $E_b$. Select $x_a = \alpha_k$, $x_b = \beta_k$, and make arbitrary choices for the $x_i \in E_i$ with $i \neq a$ and $i \neq b$. Presuming $a > b$, we have   
\begin{equation*}
\begin{split}
 f(\alpha_D)  - f(\beta_D) & =  f(x_a) - f(x_b) \\
  & = \sum_{b < i \leq a}  f(x_{i}) - f(x_{i-1}) \\
  & \leq \sum_{b < i \leq a} | f(x_{i}) - f(x_{i-1}) | \\
  & \leq \sum_{s< i \leq t} | f(x_{i}) - f(x_{i-1}) |.
\end{split}
\end{equation*}
If instead $b < a$, we still have
\[
 f(\alpha_D)  - f(\beta_D)  \leq \sum_{s< i \leq t} | f(x_{i}) - f(x_{i-1}) |
\]
by a similar argument. 

Now, summing over all discs in the Taibleson partition we obtain 
\begin{equation*}
\begin{split}
\sum_{D\in\Pi}(f(\alpha_D)  - f(\beta_D)) & \leq \sum_{1 < i \leq q^\lambda -1} |f(x_{i}) - f(x_{i-1})| \\
	& \leq V_\lambda(f) \\ 
	& \leq V_\Beer(f). 
\end{split}
\end{equation*}
Taking the supremum over all choices of $\alpha_D,\beta_D\in D$ for all $D\in\Pi$, we obtain $V_\Pi(f)\leq V_\Beer(f)$, and finally taking the supremum over all Taibleson partitions $\Pi$, we conclude that $V_\Taib(f) \leq V_\Beer(f)$.
\end{proof}

Combining Proposition~\ref{TaiblesonLeqBeerProp} with Proposition~\ref{TaibContinuousProp} we obtain the following regularity property for functions with finite Beer variation.

\begin{prop}\label{BeerContinuousProp}
If a function $f:\Ocal\to\RR$ has finite Beer variation, then there exists a countable subset $Z$ of $\Ocal$ such that $f$ is continuous at every point in $\Ocal\setminus Z$.
\end{prop}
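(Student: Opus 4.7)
The proof is essentially immediate from the two propositions invoked in the sentence preceding the statement, so the plan amounts to chaining them together and verifying there is nothing subtle hidden in the chain.

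First I would observe that by Proposition~\ref{TaiblesonLeqBeerProp}, the inequality $V_\Taib(f) \leq V_\Beer(f)$ holds unconditionally for any $f:\Ocal\to\RR$. Therefore, the hypothesis $V_\Beer(f) < +\infty$ forces $V_\Taib(f) < +\infty$ as well. Next I would apply Proposition~\ref{TaibContinuousProp} directly to $f$: that proposition guarantees the existence of a countable subset $Z\subseteq\Ocal$ off of which $f$ is continuous, whenever Taibleson variation is finite. Taking the same $Z$ produced by that proposition yields precisely the set required by the statement, completing the argument.

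There is no real obstacle here; the only thing worth double-checking is that the hypothesis of Proposition~\ref{TaibContinuousProp} requires only finite Taibleson variation (not, for instance, finite Beer variation with some additional structure), which is exactly the conclusion we have extracted from Proposition~\ref{TaiblesonLeqBeerProp}. So the proof collapses to two lines, with the content of the result really being deposited upstream in Propositions~\ref{TaibContinuousProp} and~\ref{TaiblesonLeqBeerProp}.
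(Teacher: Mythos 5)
Your proof is correct and is exactly the argument the paper intends: the paper states this proposition as an immediate consequence of combining Proposition~\ref{TaiblesonLeqBeerProp} with Proposition~\ref{TaibContinuousProp}, with no further content. Your verification that Proposition~\ref{TaiblesonLeqBeerProp} holds unconditionally and that Proposition~\ref{TaibContinuousProp} needs only finite Taibleson variation is the right (and only) thing to check.
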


On the other hand, it also follows from Proposition~\ref{TaiblesonLeqBeerProp} and Example~\ref{InfiniteTaibVariation} that a continuous function need not have finite Beer variation.

Beer used her notion of variation to prove a $p$-adic Koksma inequality, as we now describe.  For the purposes of Beer's theorem, we say a function $f:\Ocal\to\RR$ is {\bf integrable} if there exists sequences $u_n,v_n:\Ocal\to\RR$ of functions (for $n\geq1$), each defined as a finite linear combination of characteristic functions of discs, with $u_n\leq f\leq v_n$ and $\int_\Ocal(v_n-u_n)d\mu\to0$. This notion is comparable to Riemann integrability in real analysis.  As might be expected, the class of integrable functions $f:\Ocal\to\RR$ in the sense of Beer contains all continuous functions, but is strictly smaller than the class of all (measure-theoretic) Haar-integrable functions.

\begin{thm}[Beer \cite{MR237441}]
\label{thm: Beer-Koksma}
If $f: \Ocal \to \RR$ is an integrable function and $X$ is a finite subset of $\Ocal$ with discrepancy $\Delta(X)$, then  
\[
\left| \frac{1}{N} \sum_{x\in X} f(x) -\int_{\Ocal} f \, d\mu\right|  \leq 2qV_\Beer(f) \Delta(X).
\]
\end{thm}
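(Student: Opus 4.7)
The plan is to reduce the inequality to step functions on the ordered Beer partition via integrability, apply Abel's summation-by-parts, and then bound a uniform ``prefix discrepancy'' quantity in terms of $\Delta(X)$.

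By the integrability hypothesis, for each $\epsilon>0$ I can sandwich $u\le f\le v$ by step functions on a common ordered Beer partition $\{E_i\}_{0\le i\le q^\lambda-1}$ at a sufficiently deep level $\lambda$, with $\int(v-u)\,d\mu<\epsilon$. Taking $u_i=\inf_{E_i}f$ and $v_i=\sup_{E_i}f$ ensures $V_\lambda(u),V_\lambda(v)\le V_\Beer(f)$, since each consecutive difference $|u_i-u_{i-1}|$ is approximated, up to arbitrarily small error, by terms $|f(x_i)-f(x_{i-1})|$ appearing in the supremum defining $V_\lambda(f)$. The approximation errors in both $\tfrac{1}{N}\sum f(x)$ and $\int f\,d\mu$ can be made arbitrarily small by taking $\epsilon$ small and $\lambda$ large, so it suffices to prove the inequality for a step function $g=\sum_{i=0}^{q^\lambda-1}g_i\Xcal_{E_i}$.

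For such a $g$, setting $\alpha_i=|X\cap E_i|/N-\mu(E_i)$ and noting that $\sum_i\alpha_i=0$, one computes $\tfrac{1}{N}\sum_{x\in X}g(x)-\int_\Ocal g\,d\mu=\sum_i g_i\alpha_i$. Summation by parts rewrites this as $-\sum_{j=1}^{q^\lambda-1}(g_j-g_{j-1})A_{j-1}$, where $A_{j-1}:=\sum_{i\le j-1}\alpha_i=|X\cap U_{j-1}|/N-\mu(U_{j-1})$ and $U_{j-1}:=E_0\cup\cdots\cup E_{j-1}$ is the initial segment of the dictionary-ordered partition. Applying the triangle inequality yields
\[
\left|\tfrac{1}{N}\sum_{x\in X}g(x)-\int_\Ocal g\,d\mu\right|\le V_\lambda(g)\cdot\max_j|A_{j-1}|\le V_\Beer(f)\cdot\max_j|A_{j-1}|,
\]
so the whole inequality reduces to the uniform estimate $\max_j|A_{j-1}|\le 2q\,\Delta(X)$.

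This uniform bound is the main analytic obstacle. Writing $j$ in base $q$ as $j=c_{\lambda-1}q^{\lambda-1}+\cdots+c_0$, the prefix $U_{j-1}$ is a disjoint union of $\sum_k c_k$ discs of varying radii, and a naive application of $\Delta(X)$ to each disc only yields $|A_{j-1}|\le(q-1)\lambda\,\Delta(X)$, whose $\lambda$-dependence would ruin the estimate. To eliminate this dependence one must exploit the complementary identity $A_{j-1}=-\bigl(|X\cap(\Ocal\setminus U_{j-1})|/N-\mu(\Ocal\setminus U_{j-1})\bigr)$, choosing at each level of the base-$q$ tree whichever of $U_{j-1}$ or its complement admits the shorter disc decomposition, and combining this with telescoping cancellations between successive levels to collapse the $\lambda$ factor to the uniform constant $2q$. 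Once this is established, letting $\lambda\to\infty$ and $\epsilon\to 0$ completes the proof.
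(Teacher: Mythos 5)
Your overall architecture---approximate $f$ by step functions subordinate to the ordered Beer partition, apply Abel summation, and reduce everything to a uniform bound on the prefix discrepancy $\max_j|A_{j-1}|$---is the natural one, and it is consistent with the paper's own treatment, which does not reprove the theorem but defers to Beer's argument via the identification of $\Ocal$ with $[0,1]$. Your first two steps are sound (the comparison $V_\lambda(u)\leq V_\Beer(f)$ for $u_i=\inf_{E_i}f$ works because one may choose a single family of sample points $x_i$ with $f(x_i)$ simultaneously close to all the $u_i$).

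The gap is your third step, and it is not a technical loose end. Observe first that the estimate you need, $\max_j|A_{j-1}|\leq 2q\,\Delta(X)$, is precisely the assertion of the theorem for the integrable step function $f=\Xcal_{U_{j-1}}$, which has $V_\Beer(\Xcal_{U_{j-1}})=1$ since it jumps exactly once in the dictionary ordering at every level; so your reduction returns you to a special case of the theorem containing all of its difficulty, which you then dispatch with a sketch (``telescoping cancellations \dots collapse the $\lambda$ factor'') rather than a proof. Worse, that sketch cannot be completed, because the prefix discrepancy with respect to the dictionary ordering is \emph{not} controlled by the disc discrepancy up to an absolute constant: this is the van der Corput phenomenon. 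Concretely, take $q=2$, $N=2^{\lambda}$, and the prefix $U$ cut at the digit string $(1,0,1,0,\dots)$, so that $U$ is the disjoint union of the $\lambda/2$ discs $D_{1/2}(0)$, $D_{1/8}(1)$, $D_{1/32}(1+\pi^2),\dots$ at distinct scales. One can construct $X$ so that every disc of $\Ocal$ contains within one point of its expected count $N\mu(D)$ (propagate a single surplus or deficit down one branch of the disc tree at each node), whence $\Delta(X)=1/N$, while each of the $\lambda/2$ discs composing $U$ carries a surplus of exactly one point, giving $|A|=\bigl|\,|X\cap U|/N-\mu(U)\bigr|=(\lambda/2)\,\Delta(X)$. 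Here the complement of $U$ also decomposes into about $\lambda/2$ discs, so the complement trick does not shorten the decomposition, and no telescoping removes the factor $\lambda$. As written, your argument therefore fails at exactly the point you flagged as the main obstacle, and it cannot be repaired by sharpening the same estimate; you would need either a different argument or a discrepancy adapted to the ordering (a star discrepancy over initial segments), which is closer to what Beer's original formulation actually controls.
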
 

One way of viewing Beer's proof of this result is to note that, using the dictionary ordering on $\Ocal$, one can essentially identify $\Ocal$ with the unit interval $[0,1]$ of the real line, using base $q$ expansions of real numbers. Then the proof of Theorem~\ref{thm: Beer-Koksma} follows precisely the same argument as the proof of the classical Koksma inequality for a real interval.  We refer the reader to \cite{MR237441} for details.

However, Beer's approach has a notable shortcoming.  In order to reduce the proof of Theorem~\ref{thm: Beer-Koksma} to the proof of the classical real Koksma inequality, one needs to make an arbitrary choice of an ordering on $\Ocal$, and the value of $V_\Beer(f)$ depends on this choice.  Moreover, Beer variation is not translation invariant; that is, when $f:\Ocal\to\RR$ and $c\in\Ocal$, it is not necessarily the case that $f(x)$ and $f(x-c)$ have the same Beer variation.  This is unfortunate, since $\Ocal$ is a group.  

\section{Berkovich-analytic variation}\label{BerkVarSection}

In view of the previous two sections, it would be desirable to define a notion of variation on $\Ocal$ which is order-free and translation invariant (like Taibleson variation), but which admits a Koksma inequality (like Beer variation).  We define such a variation in this section.

Let $\Dcal_\Ocal$ denote the collection of all subdiscs $D$ of $\Ocal$.  Define a relation $\prec$ on $\Dcal_\Ocal$, declaring that $D'\prec D$ whenever $D'\subseteq D$ and $\mu(D')=\frac{1}{q}\mu(D)$.  Thus for each disc $D\in\Dcal_\Ocal$, there are precisely $q$ discs $D'$ satisfying $D'\prec D$, and these discs form a partition of $D$. 

Let $f:\Ocal\to\RR$ be a Haar-integrable function.  For each disc $D\in\Dcal_\Ocal$, we define
\[
f(D)=\frac{1}{\mu(D)}\int_D f\, d\mu,
\]
the average value taken by $f$ on the disc $D$.  In this way we have extended $f:\Ocal\to\RR$ to a function $f:\Ocal\amalg\Dcal_\Ocal\to\RR$; in a slight abuse of notation we use $f$ to denote both functions.  We now define the {\bf Berkovich-analytic variation} of $f:\Ocal\to\RR$ by
\begin{equation}\label{BAVarDef}
V_{\Berk}(f) = \sum_{D\in\Dcal_\Ocal}\sum_{D'\prec D}|f(D')-f(D)|.
\end{equation}
Thus $V_{\Berk}(f)$ records the absolute differences between the average value of $f$ on a disc $D$ and all of its $q$ subdiscs $D'$ with $D'\prec D$, and sums this amount over all subdiscs $D$ of $\Ocal$.

It is conceptually helpful to interpret $V_\Berk(f)$ as the variation of $f$ when it is naturally extended to a function on a certain infinite subtree of the Berkovich affine line over $K$.  To understand this interpretation, we define an infinite tree $T_\Ocal$ associated to the compact local ring $\Ocal$, as follows.  The vertices of $T_\Ocal$ are in bijective correspondence with the discs $D\subseteq\Ocal$; abusing notation slightly we also denote by $D$ the vertex of $T_\Ocal$ associated to the disc $D$.  We declare that two vertices $D'$ and $D$ of $T_\Ocal$ are connected by an edge if $D'\prec D$ as discs.  Thus $T_\Ocal$ is a complete $q$-ary rooted tree.  The root vertex $\Ocal$ meets $q$ edges, sharing one with each of its children vertices $D'\prec \Ocal$ of radius $1/q$.  Each non-root vertex $D$ shares one edge with its parent vertex and $q$ edges with its children vertices.  

If we define $\overline{T}_\Ocal=\Ocal\amalg T_\Ocal$, the disjoint union of $\Ocal$ and the rooted tree $T_\Ocal$, then $\overline{T}_\Ocal$ can be naturally identified with a subset of the Berkovich affine line $\AA^1_{\Berk,\KK}$ over the completion $\KK$ of the algebraic closure of $K$, as described say in Berkovich \cite{MR1070709} or Baker-Rumely \cite{MR2599526}.  Alternatively, $T_\Ocal$ may be identified with a subset of the Bruhat-Tits tree associated to $\mathrm{PGL}_2(\Ocal)$, see \cite{MR2482346}.

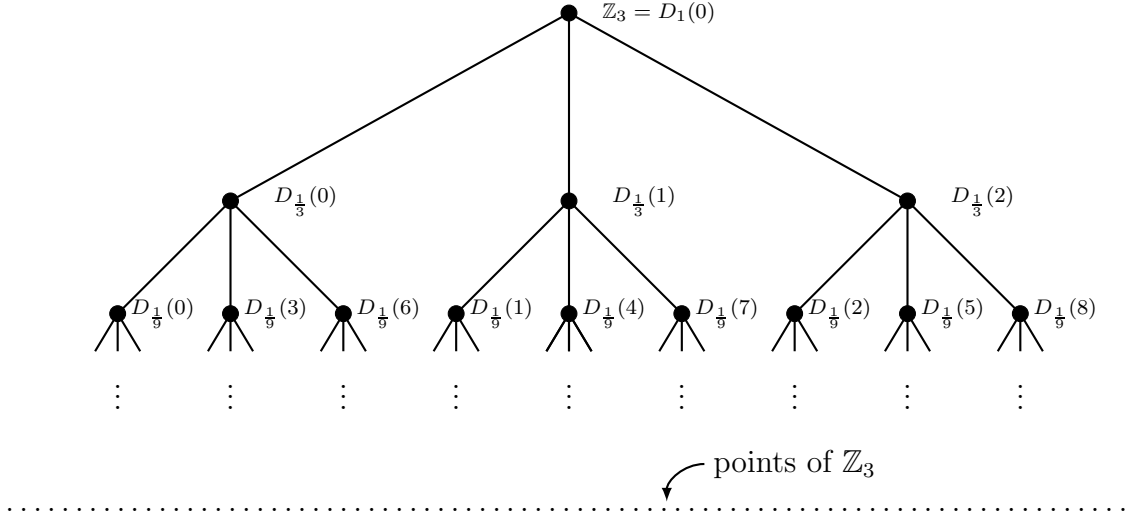
\begin{figure}

\begin{tikzpicture}

\draw[fill=black] (0,4) circle (3pt);
\draw[fill=black] (-4.5,1.5) circle (3pt);
\draw[fill=black] (0,1.5) circle (3pt);
\draw[fill=black] (4.5,1.5) circle (3pt);
\draw[fill=black] (-6,0) circle (3pt);
\draw[fill=black] (-4.5,0) circle (3pt);
\draw[fill=black] (-3,0) circle (3pt);
\draw[fill=black] (-1.5,0) circle (3pt);
\draw[fill=black] (0,0) circle (3pt);
\draw[fill=black] (1.5,0) circle (3pt);
\draw[fill=black] (3,0) circle (3pt);
\draw[fill=black] (4.5,0) circle (3pt);
\draw[fill=black] (6,0) circle (3pt);

\node at (1.2,4) {\tiny{$\ZZ_3=D_1(0)$}};
\node at  (-3.5,1.5) {\tiny{$D_{\frac{1}{3}}(0)$}};
\node at  (1,1.5) {\tiny{$D_{\frac{1}{3}}(1)$}};
\node at  (5.5,1.5) {\tiny{$D_{\frac{1}{3}}(2)$}};
\node at  (-5.4,0) {\tiny{$D_{\frac{1}{9}}(0)$}};
\node at  (-3.9,0) {\tiny{$D_{\frac{1}{9}}(3)$}};
\node at  (-2.4,0) {\tiny{$D_{\frac{1}{9}}(6)$}};
\node at  (-.9,0) {\tiny{$D_{\frac{1}{9}}(1)$}};
\node at  (0.6,0) {\tiny{$D_{\frac{1}{9}}(4)$}};
\node at  (2.1,0) {\tiny{$D_{\frac{1}{9}}(7)$}};
\node at  (3.6,0) {\tiny{$D_{\frac{1}{9}}(2)$}};
\node at  (5.1,0) {\tiny{$D_{\frac{1}{9}}(5)$}};
\node at  (6.6,0) {\tiny{$D_{\frac{1}{9}}(8)$}};

\draw[thick] (0,4) -- (-4.5,1.5);
\draw[thick] (0,4) -- (0,1.5);
\draw[thick] (0,4) -- (4.5,1.5);
\draw[thick] (-4.5,1.5) -- (-6,0);
\draw[thick] (-4.5,1.5) -- (-4.5,0);
\draw[thick] (-4.5,1.5) -- (-3,0);
\draw[thick] (0,1.5) -- (-1.5,0);
\draw[thick] (0,1.5) -- (0,0);
\draw[thick] (0,1.5) -- (1.5,0);
\draw[thick] (4.5,1.5) -- (3,0);
\draw[thick] (4.5,1.5) -- (4.5,0);
\draw[thick] (4.5,1.5) -- (6,0);

\draw[thick] (0,0) -- (0,-.5);
\draw[thick] (0,0) -- (-.3,-.5);
\draw[thick] (0,0) -- (.3,-.5);

\draw[thick] (0,0) -- (0,-.5);
\draw[thick] (0,0) -- (-.3,-.5);
\draw[thick] (0,0) -- (.3,-.5);

\draw[thick] (1.5,0) -- (1.5,-.5);
\draw[thick] (1.5,0) -- (1.2,-.5);
\draw[thick] (1.5,0) -- (1.8,-.5);

\draw[thick] (3,0) -- (3,-.5);
\draw[thick] (3,0) -- (2.7,-.5);
\draw[thick] (3,0) -- (3.3,-.5);

\draw[thick] (4.5,0) -- (4.5,-.5);
\draw[thick] (4.5,0) -- (4.2,-.5);
\draw[thick] (4.5,0) -- (4.8,-.5);

\draw[thick] (6,0) -- (6,-.5);
\draw[thick] (6,0) -- (5.7,-.5);
\draw[thick] (6,0) -- (6.3,-.5);

\draw[thick] (-1.5,0) -- (-1.5,-.5);
\draw[thick] (-1.5,0) -- (-1.8,-.5);
\draw[thick] (-1.5,0) -- (-1.2,-.5);

\draw[thick] (-3,0) -- (-3,-.5);
\draw[thick] (-3,0) -- (-3.3,-.5);
\draw[thick] (-3,0) -- (-2.7,-.5);

\draw[thick] (-4.5,0) -- (-4.5,-.5);
\draw[thick] (-4.5,0) -- (-4.8,-.5);
\draw[thick] (-4.5,0) -- (-4.2,-.5);

\draw[thick] (-6,0) -- (-6,-.5);
\draw[thick] (-6,0) -- (-5.7,-.5);
\draw[thick] (-6,0) -- (-6.3,-.5);

\node at  (-6,-1) {$\vdots$};
\node at  (-4.5,-1) {$\vdots$};
\node at  (-3,-1) {$\vdots$};
\node at  (-1.5,-1) {$\vdots$};
\node at  (0,-1) {$\vdots$};
\node at  (1.5,-1) {$\vdots$};
\node at  (3,-1) {$\vdots$};
\node at  (4.5,-1) {$\vdots$};
\node at  (6,-1) {$\vdots$};

\node at  (3,-2) {points of $\ZZ_3$};

\draw[-latex, thick] ($(1.8,-2)$) arc
    [
        start angle=90,
        end angle=180,
        x radius=0.5cm,
        y radius =0.5cm
    ]; 
    
\node at  (0,-2.6) {$\dots\dots\dots\dots\dots\dots\dots\dots\dots\dots\dots\dots\dots\dots\dots\dots\dots\dots\dots\dots\dots\dots\dots\dots\dots\dots\dots$};

\end{tikzpicture}

  \caption{The extended infinite rooted tree $\overline{T}_\Ocal$ when $\Ocal=\ZZ_3$, with detail on the first three levels showing vertices associated to discs of radius $1$, $1/3$, and $1/9$.} 
\end{figure}

If $f:\Ocal\to\RR$ is Haar-integrable, we extend $f$ to a function $f:\overline{T}_\Ocal\to\RR$ as follows.  For each vertex $D$ of  $T_\Ocal$, define $f(D)=\mu(D)^{-1}\int_Df\,d\mu$, as in the definition of Berkovich-analytic variation $V_\Berk(f)$.  On each edge of the tree $T_\Ocal$, say an edge connecting vertices $D'$ and $D$ for $D'\prec D$, we extend $f$ linearly given the knowledge of the values $f(D')$ and $f(D)$.  We may now reinterpret the Berkovich analytic variation defined in (\ref{BAVarDef}) as a graph theoretic variation
\[
V_\Berk(f) = \sum_{E}\sup_{x,y\in E}|f(x)-f(y)|,
\]
the sum taken over all edges $E$ in the tree $\overline{T}_\Ocal$.  Note that in our case, it has been mandated that $f$ is linear on each edge $E$, and hence $\sup_{x,y\in E}|f(x)-f(y)|$ is always achieved by choosing $x$ and $y$ to be the two endpoints of $E$.

\begin{ex}
Consider the characteristic function $f=\Xcal_{A}:\Ocal\to\RR$ of a proper subdisc $A\subsetneq \Ocal$  We show that
\begin{equation}\label{BerkDiscrepDisc1}
V_\Berk(f)=2(1-\mu(A))
\end{equation}
In particular $2-\frac{2}{q}< V_\Berk(f)< 2$.

We first observe that, if a disc $D\subseteq \Ocal$ does not properly contain $A$, then $f$ is constant on $D$ and hence $|f(D')-f(D)|=0$ for all $D'\prec D$.  Assuming that $A$ has radius $\mu(A)=1/q^k$, we may label as
\[
A=D_k\prec D_{k-1}\prec\dots \prec D_1\prec D_0=\Ocal
\]
the ascending chain of discs containing $A$, where $D_m$ has radius $1/q^m$.  Thus
\begin{equation}\label{BerkDiscrepDisc2}
\begin{split}
V_\Berk(f) & = \sum_{0\leq m\leq k-1} \sum_{D'\prec D_m}|f(D')-f(D_m)|.
\end{split}
\end{equation}
If $0\leq m\leq k-1$, then $f(D_{m})=q^{m-k}$, $f(D_{m+1})=q^{m-k+1}$, and $f(D')=0$ for the remaining $q-1$ discs $D'\prec D_{m}$.  Thus
\begin{equation}\label{BerkDiscrepDisc3}
\sum_{D'\prec D_{m}}|f(D')-f(D_m)|=q^{m-k+1}-q^{m-k} + (q-1)q^{m-k}.
\end{equation}
We obtain (\ref{BerkDiscrepDisc1}) from (\ref{BerkDiscrepDisc2}) and (\ref{BerkDiscrepDisc3}) using a gemetric series calculation.
\end{ex}

\begin{ex}\label{BerkPoweringExample}
Let $c\in \Ocal$, let $t>0$, and consider the function $f:\Ocal\to\RR$ defined by $f(x) = |x-c|^t$.  We will show that 
\[
V_\Berk(f) =\frac{2(q-1)}{q-q^{-t}}.  
\]
In particular $1<V_\Berk(f)<2$.

By translation invariance we may assume without loss of generality that $c=0$ and thus $f(x)=|x|^t$.  If a disc $D\subseteq \Ocal$ does not contain zero, then $f$ is constant on $D$ and hence $\sum_{D'\prec D}|f(D')-f(D)|=0$.  Thus letting $D_n=D_{1/q^n}(0)$, we have
\begin{equation*}
V_\Berk(f) = \sum_{n\geq0}\sum_{D'\prec D_n}|f(D')-f(D_n)|.
\end{equation*} 
We calculate
\begin{equation*}
\begin{split}
f(D_n) & = \frac{1}{\mu(D_n)}\int_{D_n}|x|^td\mu(x) \\
	& = q^n\sum_{k\geq n}\mu(D_k\setminus D_{k+1})q^{-kt} \\
	& = q^n\sum_{k\geq n}(q^{-k}-q^{-k-1})q^{-kt} \\
	& = Cq^{-nt}
\end{split}
\end{equation*}
via a geometric series calculation, where $C=\frac{q-1}{q-q^{-t}}<1$.  If $D'\prec D_{n}$ but $D'\neq D_{n+1}$, then $D'$ does not contain $0$ and hence $f(D')=q^{-nt}$.  Therefore
\begin{equation*}
\begin{split}
V_\Berk(f) 	& = \sum_{n\geq0}\bigg(|f(D_{n+1})-f(D_n)|+ \sum_{\substack{D'\prec D_n \\ D'\neq D_{n+1}}}|f(D')-f(D_n)|\bigg) \\
	& = \sum_{n\geq0}(C(q^{-nt}-q^{-(n+1)t})+(q-1)(q^{-nt}-Cq^{-nt})) \\
	& = \left(C(1-q^{-t})+(q-1)(1-C)\right)\sum_{n\geq0}q^{-nt} \\
	& = \left(C(1-q^{-t})+(q-1)(1-C)\right)\frac{1}{1-q^{-t}} =2C,
\end{split}
\end{equation*}
which is the desired identity.
\end{ex}

The definition of the Berkovich-analytic variation of a Haar-integrable function $f:\Ocal\to\RR$ depends only on the values of integrals of $f$ taken over discs, and therefore $V_\Berk(f)=V_\Berk(g)$ whenever $f=g$ Haar-almost everywhere.  However, functions with finite Berkovich-analytic variation still satisfy a strong regularity condition, as the following result shows.

\begin{thm}\label{FiniteBAVImpliesContinuousThm}
If $f:\Ocal\to\RR$ is Haar-integrable and $V_\Berk(f)<+\infty$, then there exists a unique continuous function $g:\Ocal\to\RR$ such that $f(x)=g(x)$ for Haar-almost all $x\in\Ocal$.
\end{thm}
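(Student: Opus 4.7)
The plan is to exhibit $g$ as the uniform limit of the natural martingale of disc-averages associated to $f$. For each $x\in\Ocal$ and each integer $n\geq 0$, let $D_n(x)$ denote the unique disc of radius $1/q^n$ containing $x$, and define $g_n:\Ocal\to\RR$ by $g_n(x)=f(D_n(x))$. Each $g_n$ is locally constant, hence continuous, since it takes a single value $f(D)$ on each disc $D$ of radius $1/q^n$. Note also that $D_{n+1}(x)\prec D_n(x)$, so $g_{n+1}(x)-g_n(x)=f(D_{n+1}(x))-f(D_n(x))$ is a single summand in the inner sum $\sum_{D'\prec D_n(x)}|f(D')-f(D_n(x))|$.

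The key estimate is a uniform bound. Grouping the definition of the Berkovich-analytic variation by disc radius, write $V_\Berk(f)=\sum_{n\geq0}L_n$, where
\[
L_n=\sum_{\substack{D\in\Dcal_\Ocal \\ \mu(D)=1/q^n}}\sum_{D'\prec D}|f(D')-f(D)|.
\]
For any $x\in\Ocal$, the single term $|g_{n+1}(x)-g_n(x)|=|f(D_{n+1}(x))-f(D_n(x))|$ is bounded by the full inner sum at $D_n(x)$, which in turn is bounded by $L_n$. Therefore $\|g_{n+1}-g_n\|_\infty\leq L_n$, and since $\sum_n L_n=V_\Berk(f)<+\infty$, the sequence $(g_n)$ is uniformly Cauchy. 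Let $g:\Ocal\to\RR$ denote its uniform limit; as a uniform limit of continuous functions on the compact space $\Ocal$, the function $g$ is continuous.

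It remains to identify $g$ with $f$ almost everywhere. The sequence $(g_n)$ is precisely the martingale of conditional expectations $g_n=E[f\mid\mathcal F_n]$, where $\mathcal F_n$ is the finite $\sigma$-algebra generated by the discs of radius $1/q^n$; since these discs generate the Borel $\sigma$-algebra on $\Ocal$, the martingale convergence theorem yields $g_n\to f$ pointwise almost everywhere (equivalently, one may invoke a non-Archimedean Lebesgue differentiation theorem along this natural filtration of nested discs). Combined with the uniform convergence $g_n\to g$, this forces $g=f$ Haar-almost everywhere. Uniqueness is then immediate: if $g_1,g_2$ are both continuous and agree with $f$ almost everywhere, then $\{g_1=g_2\}$ has full Haar measure and is therefore dense in $\Ocal$ (every nonempty open subset of $\Ocal$ contains a disc of positive measure), and two continuous functions agreeing on a dense set are equal.

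The only point requiring care is step three, the a.e.\ identification $g=f$, which is where one must import an external convergence theorem (martingale convergence or the disc-filtration differentiation theorem). The uniform Cauchy estimate is straightforward once one notices that each $|g_{n+1}(x)-g_n(x)|$ is controlled by the full contribution $L_n$ of level $n$ to the unweighted sum defining $V_\Berk(f)$; this is precisely where the unweighted nature of the definition pays off.
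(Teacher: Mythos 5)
Your proposal is correct, and its core coincides with the paper's argument: the same sequence $g_n(x)=f(D_{1/q^n}(x))$ of disc averages, the same uniform Cauchy estimate obtained by bounding $|g_{n+1}(x)-g_n(x)|$ by the level-$n$ slice of the series defining $V_\Berk(f)$, and the same conclusion that the uniform limit $g$ is continuous. The one step where you take a genuinely different route is the almost-everywhere identification $g=f$. You invoke the martingale convergence theorem (equivalently, Lebesgue differentiation along the disc filtration), using that $g_n=E[f\mid\mathcal{F}_n]$ and that the discs generate the Borel $\sigma$-algebra; this is legitimate and efficient, but it imports a nontrivial external theorem. The paper stays self-contained: it shows directly that $\int_D g\,d\mu=\int_D f\,d\mu$ for every disc $D$, by passing the limit under the integral (dominated convergence, using the uniform convergence $g_n\to g$) and interchanging the order of integration over the symmetric set $\{(x,y)\mid y\in D_{1/q^n}(x)\}$, and then applies an elementary lemma, proved via inner regularity of Haar measure, that a Haar-integrable function whose integral vanishes on every disc vanishes almost everywhere. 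The trade-off is clear: your route is shorter if one is willing to cite martingale convergence, while the paper's route uses only Fubini and regularity of the measure. Your uniqueness argument (a full-measure set meets every disc, hence is dense, and continuous functions agreeing on a dense set coincide) is correct and matches what the paper's statement requires.
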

\begin{proof}
Define a sequence of function $g_n:\Ocal\to\RR$ (for $n\geq0$) by 
\[
g_n(x)=f(D_{1/q^{n}}(x))=q^n\int_{D_{1/q^{n}}(x)} f\,d\mu.
\]
Then define $g:\Ocal\to\RR$ by $g(x)=\lim_{n\to+\infty}g_n(x)$.  To see that this limit exists, note that for $n_1<n_2$ we have
\begin{equation}\label{BerkRegularityCalc1}
\begin{split}
|g_{n_2}(x)-g_{n_1}(x)| & \leq \sum_{n_1\leq n\leq n_2-1}|g_{n+1}(x)-g_{n}(x)| \\
	& \leq \sum_{n\geq n_1}|g_{n+1}(x)-g_{n}(x)| \\
	& \leq \sum_{n\geq n_1}\sum_{\substack{D\in\Dcal_\Ocal \\ \mu(D)=1/q^n}}\sum_{D'\prec D}|f(D')-f(D)|.
\end{split}
\end{equation}
The last expression is the tail of the convergent series (\ref{BAVarDef}) defining Berkovich-analytic variation.  Since the left-hand-side of (\ref{BerkRegularityCalc1}) is majorized by the tail of a convergent series, it follows that the sequence $\{g_n(x)\}$ is Cauchy and hence converges.  Moreover, taking $n_2\to+\infty$ we see that $|g(x)-g_n(x)|$ is also bounded above by the tail of the convergent series (\ref{BAVarDef}), which shows that the convergence $g_n\to g$ is uniform.

If $|x-y|=1/q^n$, then $D_{1/q^n}(x)=D_{1/q^n}(y)$ and so $g_n(x)=g_n(y)$, and therefore
\[
|g(x)-g(y)|\leq |g(x)-g_n(x)|+|g_n(y)-g(y)|\to0
\]
as $n\to+\infty$, proving that $g$ is continuous.

If $D\subseteq\Ocal$ is any disc, then 
\begin{equation*}
\begin{split}
\int_{D}g(x)\,d\mu(x) & = \lim_{n\to+\infty}\int_{D}g_n(x)\,d\mu(x) \\
	& = \lim_{n\to+\infty}q^n\int_{D}\int_{D_{1/q^n}(x)}f(y)\,d\mu(y)\,d\mu(x) \\
	& = \lim_{n\to+\infty}q^n\int_{D}\int_{D_{1/q^n}(y)}f(y)\,d\mu(x)\,d\mu(y) \\
	& = \lim_{n\to+\infty}q^n\int_{D}\mu(D_{1/q^n}(y))f(y)\,d\mu(y) \\
	& = \lim_{n\to+\infty}\int_{D}f(y)\,d\mu(y) \\
	& = \int_{D}f(y)\,d\mu(y).
\end{split}
\end{equation*}
The first equality in the preceding calculation is an interchange of limit and integral which is justified by the dominated convergence theorem, as $g_n\to g$ uniformly and $g$ is continuous and hence bounded.  In the interchange of integrals in the third equality, we note that because $D$ is fixed and $n\to+\infty$, we may assume that $1/q^n\leq\mu(D)$, and in this case
\[
\{(x,y)\mid x\in D \text{ and } y\in D_{1/q^n}(x)\}=\{(x,y)\mid y\in D \text{ and } x\in D_{1/q^n}(y)\}.
\]

Finally, setting $F(x)=f(x)-g(x)$, we have $\int_D F\,d\mu=0$ for all discs $D\subseteq\Ocal$, and therefore $F=0$ Haar-almost everywhere by Lemma~\ref{VanishingIntegralLemma}, completing the proof of the theorem.
\end{proof}

\begin{lem}\label{VanishingIntegralLemma}
If $F:\Ocal\to\RR$ is a Haar-integrable function which satisfies $\int_D F\,d\mu=0$ for all discs $D\subseteq\Ocal$, then $F(x)=0$ for Haar-almost all $x\in\Ocal$.
\end{lem}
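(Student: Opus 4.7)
The plan is to show that $\int_A F\,d\mu=0$ for every Borel measurable set $A\subseteq\Ocal$, from which $F=0$ almost everywhere follows by a standard argument applied to the sets where $F$ is strictly positive or strictly negative.

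The key structural observation is that every open subset $U\subseteq\Ocal$ is a countable disjoint union of discs. Indeed, by the ultrametric inequality, any two discs in $\Ocal$ are either disjoint or nested, so for each $x\in U$ there is a unique largest disc $D(x)\subseteq U$ containing $x$. The distinct discs arising this way partition $U$, and since $\Ocal$ is second countable (there are only countably many discs of the form $D_{1/q^n}(a)$ for $n\geq0$ and $a$ ranging over coset representatives modulo $\pi^n$), this partition is countable. Writing $U=\coprod_{i\geq1}D_i$ and using the hypothesis $\int_{D_i}F\,d\mu=0$ together with dominated convergence applied to the partial sums (dominated by $|F|$), we conclude
\[
\int_U F\,d\mu=\sum_{i\geq1}\int_{D_i}F\,d\mu=0
\]
for every open $U\subseteq\Ocal$.

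Next I would extend this to arbitrary Borel sets using outer regularity of Haar measure. Given a Borel set $A\subseteq\Ocal$, choose open sets $U_n\supseteq A$ with $\mu(U_n\setminus A)\to0$; replacing $U_n$ by $\bigcap_{m\leq n}U_m$ if necessary, we may assume $U_n\downarrow$. Then $\Xcal_{U_n}F\to\Xcal_A F$ pointwise almost everywhere and $|\Xcal_{U_n}F|\leq|F|\in L^1$, so dominated convergence yields $\int_A F\,d\mu=\lim_n\int_{U_n}F\,d\mu=0$.

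Finally, applying this with $A=\{x\in\Ocal\mid F(x)>0\}$ gives $\int_A F\,d\mu=0$ with a nonnegative integrand that is strictly positive on $A$, forcing $\mu(A)=0$; symmetrically $\mu\{F<0\}=0$, so $F=0$ almost everywhere. The only mild obstacle is the countability of the maximal disc decomposition of an open set, but this follows immediately from second countability of $\Ocal$; the remainder is routine measure theory.
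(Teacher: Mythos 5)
Your proof is correct and follows essentially the same route as the paper's: both reduce to the fact that every open subset of $\Ocal$ is a countable disjoint union of discs (so $\int_U F\,d\mu=0$ for all open $U$) and then invoke regularity of the finite Haar measure to reach the sets $\{F>0\}$ and $\{F<0\}$. The only cosmetic difference is that you use outer regularity (shrinking open supersets of a Borel set plus dominated convergence) where the paper passes to complements and uses inner regularity to produce a closed subset of $\{x\mid F(x)>0\}$ of positive measure, deriving a contradiction.
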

\begin{proof}
Any nonempty open subset $U$ of $\Ocal$ is a countable union of discs, and therefore $\int_U F\,d\mu=0$ for all open $U\subseteq\Ocal$.  Taking complements we obtain $\int_K F\,d\mu=0$ for all closed sets $K\subseteq\Ocal$.  

For the sake of obtaining a contradiction, assume that $\{x\in\Ocal\mid F(x)\neq0\}$ has positive Haar measure.  Then at least one of $\{x\in\Ocal\mid F(x)>0\}$ or $\{x\in\Ocal\mid F(x)<0\}$ has positive Haar measure; assume without loss of generality that it is the former.  Since Haar measure is finite on $\Ocal$, it is inner regular, and therefore there exists a closed subset $K\subseteq\{x\in\Ocal\mid F(x)>0\}$ with $\mu(K)>0$. Together, the facts that $\mu(K)>0$ and $F(x)>0$ for all $x\in K$ imply that $\int_K F\,d\mu>0$, a contradiction.
\end{proof}

The converse of Theorem~\ref{FiniteBAVImpliesContinuousThm} is false; that is, not every continuous function has finite Berkovich-analytic variation.  Indeed, in Example~\ref{InfiniteTaibVariation} we constructed a continuous function with infinite Taibleson variation.  By the following result, this continuous function has infinite Berkovich-analytic variation as well.

\begin{prop}\label{TaiblesonLeqBerkProp}
For any continuous $f: \Ocal \to \RR$, we have $V_\Taib(f) \leq V_\Berk(f)$.
\end{prop}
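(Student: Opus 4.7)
The plan is to fix an arbitrary Taibleson partition $\Pi$ of $\Ocal$ and show $V_\Pi(f) \leq V_\Berk(f)$; taking the supremum over $\Pi$ then gives the conclusion. For each $D \in \Pi$, compactness of $D$ together with continuity of $f$ ensures that the supremum $\sup_{x,y \in D}(f(x)-f(y))$ is attained at some pair of points $x_D, y_D \in D$, so that $V_\Pi(f) = \sum_{D \in \Pi}(f(x_D) - f(y_D))$. The central idea is to represent $f(x_D)-f(y_D)$ as the limit of a telescoping sum along the unique path in the subtree of $T_\Ocal$ rooted at $D$ connecting the two descendant vertices corresponding to small discs around $x_D$ and $y_D$.

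Write $r=\mu(D)$, and for each $n \geq 0$ let $D_n^{x_D} := D_{r/q^n}(x_D)$ denote the unique descendant of $D$ of radius $r/q^n$ containing $x_D$, with analogous notation $D_n^{y_D}$ for $y_D$. Since $D_n^{x_D}$ shrinks to $\{x_D\}$, continuity of $f$ forces $f(D_n^{x_D}) \to f(x_D)$ (the averages converge to the point value), and likewise $f(D_n^{y_D}) \to f(y_D)$. Let $k_0^D \geq 1$ be the smallest level at which $D_k^{x_D} \neq D_k^{y_D}$; then $D_{k_0^D-1}^{x_D} = D_{k_0^D-1}^{y_D}$ is the lowest common ancestor in the subtree rooted at $D$, and for all $n \geq k_0^D$ I can write
\[
f(D_n^{x_D}) - f(D_n^{y_D}) = \sum_{k=k_0^D}^{n}\bigl(f(D_k^{x_D}) - f(D_{k-1}^{x_D})\bigr) - \sum_{k=k_0^D}^{n}\bigl(f(D_k^{y_D}) - f(D_{k-1}^{y_D})\bigr).
\]
Applying the triangle inequality bounds $|f(D_n^{x_D}) - f(D_n^{y_D})|$ by a sum of terms of the form $|f(D') - f(D'')|$ with $D' \prec D''$, one for each edge along the ascending-then-descending path.

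The key combinatorial observation is that no edge is double-counted. Within a single $D$, the two ancestor chains share no edge because the branching at level $k_0^D$ sends them into disjoint subtrees. Across different $D \in \Pi$, the subtrees of $T_\Ocal$ rooted at the distinct members of $\Pi$ are pairwise edge-disjoint (the discs in $\Pi$ are disjoint, hence no disc is a descendant of two of them). Every edge used lies in the subtree rooted at some $D \in \Pi$, hence appears in the definition of $V_\Berk(f)$. Therefore, summing over $D \in \Pi$ and letting $n \to \infty$ yields
\[
V_\Pi(f) = \sum_{D \in \Pi}\bigl(f(x_D) - f(y_D)\bigr) \leq \sum_{D \in \Dcal_\Ocal}\sum_{D' \prec D}|f(D') - f(D)| = V_\Berk(f),
\]
and taking the supremum over $\Pi$ completes the argument. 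The only delicate step is the edge-disjointness bookkeeping, but it is clean once $k_0^D$ is introduced; the continuity hypothesis is used exactly once, to pass from point values to averages on shrinking discs.
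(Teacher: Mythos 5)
Your proposal is correct and follows essentially the same route as the paper's proof: telescoping the averages $f(D_{1/q^k}(x))$ and $f(D_{1/q^k}(y))$ along the two ancestor chains down from their least common ancestor, using continuity to pass from averages to point values, and noting that the edges used are pairwise distinct and confined to the subtrees rooted at the (disjoint) discs of the partition. The only cosmetic difference is that you invoke compactness to attain the supremum on each $D$, whereas the paper bounds $|f(x)-f(y)|$ for arbitrary $x,y\in D$ and takes the supremum at the end.
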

\begin{proof}
The graph-theoretic idea behind this proof is as follows: if $\Pi$ is a Taibleson partition, $D\in\Pi$ is a disc, and $x,y\in D$, then $|f(x)-f(y)|$ is majorized by the variation of $f:\overline{T}_\Ocal\to\RR$ along the interval $I_{x,y}$ in $\overline{T}_\Ocal$ which traverses from $x\in\Ocal$ ``up'' to the smallest disc $D_{|x-y|}(x)=D_{|x-y|}(y)$ containing both $x$ and $y$, and then ``down'' from this disc to $y\in\Ocal$.  Moreover, the intervals $I_{x,y}$ are disjoint for fixed $x,y\in D$ as $D$ ranges over all of the discs in a given Taibleson partition.

To make this precise, let $D_0\subseteq\Ocal$ be a disc, and let $x,y\in D_0$ with $|x-y|=1/q^n$.  To ease the notation define $\alpha_k=f(D_{1/q^k}(x))$ and $\beta_k=f(D_{1/q^k}(y))$.  Since $f$ is continuous, we have $\alpha_k\to f(x)$ and $\beta_k\to f(y)$ as $k\to+\infty$.  Since $D_{1/q^n}(x)=D_{1/q^n}(y)$, we have $\alpha_n=\beta_n$, and thus if $k>n$, a telescoping series calculation gives
\begin{equation*}
\begin{split}
|f(x)-f(y)| & = |f(x)-\alpha_n+\beta_n-f(y)|  \\
 & \leq |f(x)-\alpha_k|+|\beta_k-f(y)|+\sum_{n=i}^{k-1}\left(|\alpha_{i+1}-\alpha_i|+|\beta_{i+1}-\beta_i|\right)
\end{split}
\end{equation*}
Letting $k\to+\infty$ we obtain
\begin{equation*}
\begin{split}
|f(x)-f(y)| & \leq \sum_{i=n}^{\infty}\left(|\alpha_{i+1}-\alpha_i|+|\beta_{i+1}-\beta_i|\right) \\
	& \leq \sum_{D\subseteq D_0}\sum_{D'\prec D}|f(D')-f(D)|.
\end{split}
\end{equation*}
Summing over all discs in a Taibleson partition $\Pi=\{D_1,D_2,\dots,D_M\}$ of $\Ocal$, we have
\begin{equation*}
\begin{split}
\sum_{D_m\in\Pi}\sup_{x,y\in D_m}|f(x)-f(y)| & \leq \sum_{D_m\in\Pi}\sum_{D\subseteq D_m}\sum_{D'\prec D}|f(D')-f(D)| \\
	& \leq \sum_{D\subseteq \Ocal}\sum_{D'\prec D}|f(D')-f(D)|=V_\Berk(f).
\end{split}
\end{equation*}
We obtain the desired inequality $V_\Taib(f) \leq V_\Berk(f)$ by taking the supremum over all Taibleson partitions $\Pi$ of $\Ocal$.
\end{proof}

\begin{ex}
In this example we show that no inequality in the opposite direction of Proposition~\ref{TaiblesonLeqBerkProp} is possible.  As in Example~\ref{AlternatingExample}, for each $k\geq0$, we define  $A_k=D_{1/q^{k+1}}(\pi^k)$, and that the discs $A_k$ are pairwise disjoint.  Define $f:\Ocal\to\RR$ by
\begin{equation}\label{AlternatingExampleFunction2}
f(x)=\sum_{k\geq0}\frac{(-1)^{k}}{k+1}\Xcal_{A_k}(x).
\end{equation}
We showed in Example~\ref{AlternatingExample} that this function is continuous, but not Lipschitz continuous, and that it has finite Taibleson variation.

We can use the graph-theoretic interpretation of Berkovich-analytic variation to give a simple proof that $V_\Berk(f)=+\infty$.  Since $f$ takes the constant value $(-1)^{k}/(k+1)$ on each disc $A_k$, we have $f(A_k)=(-1)^{k}/(k+1)$.  For each $k\geq0$ let $I_k$ be the interval in the tree $T_\Ocal$ formed by the union of three edges: 
\begin{itemize}
\item first the edge from $A_k=D_{1/q^{k+1}}(\pi^k)$ ``up'' to $D_{1/q^k}(0)$;
\item next the edge from $D_{1/q^k}(0)$ ``down'' to $D_{1/q^{k+1}}(0)$
\item and finally the edge from $D_{1/q^{k+1}}(0)$ ``down'' to $A_{k+1}=D_{1/q^{k+2}}(\pi^{k+1})$.
\end{itemize}
The values taken by $f:T_\Ocal\to\RR$ along the interval $I_k$ traverse from $f(A_k)=(-1)^{k}/(k+1)$ to $f(A_{k+1})=(-1)^{k+1}/(k+2)$, and hence the variation of $f$ on $I_k$ is at least $|f(A_k)-f(A_{k+1})|=\frac{1}{k+1}+\frac{1}{k+2}$.  The intervals $I_k$ are disjoint in $T_\Ocal$ and therefore the Berkovich-analytic variation $V_\Berk(f)$ is minorized by the divergent series $\sum_{k\geq0}(\frac{1}{k+1}+\frac{1}{k+2})$, and we conclude that $V_\Berk(f)=+\infty$.
\end{ex}

Finally, we are ready to prove our non-Archimedean analogue of Koksma's inequality using Berkovich-analytic variation.

\begin{thm}\label{Berkovich-Koksma}
If $f: \Ocal \to \RR$ is a continuous, Haar-integrable function and $X$ is a finite subset of $\Ocal$ with discrepancy $\Delta(X)$, then  \begin{equation}
\label{eqn: Berkovich-Koksma}
\left| \frac{1}{N} \sum_{x\in X} f(x) -\int_{\Ocal} f \, d\mu \right| \leq \left(1+\frac{1}{q}\right)V_\Berk(f)\Delta(X). 
\end{equation} 
\end{thm}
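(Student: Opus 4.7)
The strategy is to view both the Riemann-type sum $\frac{1}{N}\sum_{x \in X}f(x)$ and the integral $\int_\Ocal f\,d\mu$ as telescoping expansions along the edges of the Berkovich tree $\overline{T}_\Ocal$, and then compare them edge-by-edge. This is a tree-theoretic analogue of the classical integration-by-parts proof of Koksma's inequality on $[0,1]$, with the crucial ``integration by parts'' replaced by a martingale-type cancellation on the tree.

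First, I would use continuity of $f$ together with the Lebesgue-differentiation statement $\lim_{n \to \infty} f(D_{1/q^n}(x)) = f(x)$ (which holds because $f(D) = \mu(D)^{-1}\int_D f\,d\mu$ is the average of a continuous function on a shrinking disc) to write, for every $x \in \Ocal$, the pointwise telescoping identity
\begin{equation*}
f(x) = f(\Ocal) + \sum_{n \geq 0}\left(f(D_{1/q^{n+1}}(x)) - f(D_{1/q^n}(x))\right).
\end{equation*}
We may assume $V_\Berk(f) < \infty$; then the absolute values of these differences, as $x$ traces out the branch from $\Ocal$ to $x$ in $\overline{T}_\Ocal$, sum to at most $V_\Berk(f)$, so the series is absolutely convergent for each $x$.

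Next, averaging over $x \in X$ and interchanging the two sums (justified by absolute convergence, since the total sum is bounded by $N\cdot V_\Berk(f)$), then reindexing by the edges of $T_\Ocal$---for each proper subdisc $E \subsetneq \Ocal$ there is a unique $D \in \Dcal_\Ocal$ with $E \prec D$, and the edge $E\prec D$ collects the difference $f(E) - f(D)$ with multiplicity $|X\cap E|$---yields
\begin{equation*}
\frac{1}{N}\sum_{x \in X}f(x) = f(\Ocal) + \sum_{E \subsetneq \Ocal}\frac{|X\cap E|}{N}\left[f(E) - f(D)\right].
\end{equation*}
The identical decomposition applied to $\int_\Ocal f\,d\mu$ collapses to $f(\Ocal)$ thanks to the ``martingale'' cancellation $\sum_{E \prec D}\mu(E)[f(E) - f(D)] = 0$ at every disc $D$, which is immediate from $\mu(D)f(D) = \sum_{E\prec D}\mu(E)f(E)$. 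Subtracting gives the key identity
\begin{equation*}
\frac{1}{N}\sum_{x \in X}f(x) - \int_\Ocal f\,d\mu = \sum_{E \subsetneq \Ocal}\left(\frac{|X\cap E|}{N} - \mu(E)\right)\left[f(E) - f(D)\right],
\end{equation*}
in which the role of discrepancy is now explicit on each edge.

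The bound (\ref{eqn: Berkovich-Koksma}) then follows by applying the triangle inequality, the discrepancy estimate $\left||X\cap E|/N - \mu(E)\right| \leq \Delta(X)$ on each edge, and the definition of $V_\Berk(f)$. The main obstacles are (i) rigorously justifying the pointwise telescoping identity for continuous $f$ (needing Lebesgue differentiation on $\Ocal$) together with the interchange of summation in the passage to the edge sum, and (ii) extracting the precise combinatorial constant; a crude triangle inequality already yields $\Delta(X)V_\Berk(f)$, so the stated factor $1 + 1/q$ must come from a slightly different accounting---for instance, splitting $|X\cap E|/N$ into $\mu(E)$ plus a discrepancy piece and separately controlling the two sums using the discrepancies at both the child $E$ and its parent $D$, which contributes an extra $1/q$ coming from the fact that $\mu(E) = \mu(D)/q$.
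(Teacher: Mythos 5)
Your argument is correct, and it is genuinely different from the paper's. The paper works with $E_D=\sum_{x\in X\cap D}(f(x)-f(D))$, proves the one-level recursion $E_D=\sum_{D'\prec D}E_{D'}+\sum_{D'\prec D}(N_{D'}-N_D/q)(f(D')-f(D))$, iterates it down to depth $M$, and disposes of the boundary term $\sum_{\mu(D)=1/q^M}|E_D|$ by uniform continuity; the factor $1+\frac{1}{q}$ arises there because $N_{D'}$ is centered at $N_D/q$, so one pays for the discrepancy of both child and parent via $|N_{D'}-N\mu(D')|+\frac{1}{q}|N_D-N\mu(D)|\le N(1+\frac{1}{q})\Delta(X)$. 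Your global telescoping instead centers each edge weight at $\mu(E)$ directly, using the exact cancellation $\sum_{E\prec D}\mu(E)(f(E)-f(D))=0$, and the crude triangle inequality then already gives
\begin{equation*}
\left|\frac{1}{N}\sum_{x\in X}f(x)-\int_\Ocal f\,d\mu\right|\le\Delta(X)\sum_{E\subsetneq\Ocal}\bigl|f(E)-f(\mathrm{parent}(E))\bigr|=V_\Berk(f)\,\Delta(X),
\end{equation*}
which is \emph{sharper} than the stated bound; your worry in obstacle (ii) about recovering the factor $1+\frac{1}{q}$ is therefore moot, since the theorem follows a fortiori. Obstacle (i) is also a non-issue: for continuous $f$ no Lebesgue differentiation is needed, because $|f(D_{1/q^n}(x))-f(x)|\le\sup_{y\in D_{1/q^n}(x)}|f(y)-f(x)|\to 0$ by (uniform) continuity, and every interchange and regrouping you perform is justified by the absolute convergence $\sum_{E\subsetneq\Ocal}|f(E)-f(\mathrm{parent}(E))|=V_\Berk(f)<+\infty$ (the case $V_\Berk(f)=+\infty$ being trivial). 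In short, the paper's recursion is more local and matches its discrepancy bookkeeping, while your Abel-summation-on-the-tree identity isolates the discrepancy of each individual disc and yields the cleaner constant $1$.
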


\begin{proof}
Let $N=|X|$, and for each disc $D\subseteq \Ocal$, define quantities
\begin{equation*}
\begin{split}
N_D & =|X\cap D| \\
E_D & = \sum_{x\in X\cap D}f(x) - N_Df(D) =\sum_{x\in X\cap D}(f(x)-f(D)).
\end{split}
\end{equation*}
Note that $N_D$ depends on $X$, and $E_D$ depends on both $X$ and $f$, but we suppress these dependencies to ease the notation.  Note also that $N_\Ocal=N$, and the left-hand-side of the desired inequality (\ref{eqn: Berkovich-Koksma}) can be written as $|E_\Ocal|/N$.

The quantity $E_D$ satisfies the identity
\begin{equation}
\label{BerkovichKoksmaRecursion}
E_D = \sum_{D'\prec D}E_{D'}+\sum_{D'\prec D}\left(N_{D'}-\frac{N_D}{q}\right)(f(D')-f(D)),
\end{equation} 
which is elementary to check by simplifying the right-hand-side and using the identities $\sum_{D'\prec D}N_{D'}=N_D$ and $\sum_{D'\prec D}f(D')=qf(D)$.  This can be viewed as a recursion formula for $E_D$ in terms of $E_{D'}$ over the $q$ subdiscs $D'\prec D$.  Since $\mu(D')=\frac{1}{q}\mu(D)$ we have the estimate
\begin{equation*}
\begin{split}
\left|N_{D'}-\frac{N_D}{q}\right| & =\left|N_{D'}-\mu(D')N+\frac{1}{q}(\mu(D)N-N_D)\right| \\
	& \leq N\left(\left|\frac{N_{D'}}{N}-\mu(D')\right|+\frac{1}{q}\left|\frac{N_{D}}{N}-\mu(D)\right|\right) \\
	& \leq N\left(1+\frac{1}{q}\right)\Delta(X),
\end{split}
\end{equation*}
and applying this to (\ref{BerkovichKoksmaRecursion}) we obtain
\begin{equation}
\label{BerkovichKoksmaRecursionEstimate}
|E_D| \leq \sum_{D'\prec D}|E_{D'}|+N\left(1+\frac{1}{q}\right)\Delta(X)\sum_{D'\prec D}|f(D')-f(D)|.
\end{equation} 

Let $M\geq1$ be an arbitrary positive integer.  Iterating the bound (\ref{BerkovichKoksmaRecursionEstimate}) over all discs $D\subseteq \Ocal$ with $1/q^{M-1}\leq\mu(D)\leq1$, we obtain
\begin{equation}
\label{BerkovichKoksmaRecursionEstimateIterated}
\begin{split}
|E_\Ocal| & \leq \sum_{\substack{D\subseteq \Ocal \\ \mu(D)=1/q^{M}}}|E_D| +N\left(1+\frac{1}{q}\right)\Delta(X)\sum_{\substack{D\subseteq \Ocal \\ 1/q^{M-1}\leq\mu(D)\leq1}}\sum_{D'\prec D}|f(D')-f(D)| \\
	& \leq \sum_{\substack{D\subseteq \Ocal \\ \mu(D)=1/q^{M}}}|E_D| +N\left(1+\frac{1}{q}\right)\Delta(X)V_\Berk(f).
\end{split}
\end{equation} 

To complete the proof of the theorem, let $\epsilon>0$ be arbitrary.  Since $f:\Ocal\to\RR$ is continuous on a compact space, it is uniformly continuous, so there exists $M\geq1$ so large that $|f(x)-f(y)|\leq\epsilon$ whenever $|x-y|\leq1/q^M$.  It follows that if $D\subseteq\Ocal$ is a disc of radius $1/q^M$ then 
\[
|E_D|=\left|\sum_{x\in X\cap D}(f(x)-f(D))\right|\leq \epsilon N_D 
\]
and hence
\[
\sum_{\substack{D\subseteq \Ocal \\ \mu(D)=1/q^{M}}}|E_D|\leq \sum_{\substack{D\subseteq \Ocal \\ \mu(D)=1/q^{M}}}\epsilon N_D=\epsilon N
\]
since the discs of radius $1/q^M$ are a partition of $\Ocal$.  Applying this last estimate to (\ref{BerkovichKoksmaRecursionEstimateIterated}) we have
\begin{equation}
\frac{|E_\Ocal|}{N} \leq \epsilon + \left(1+\frac{1}{q}\right)\Delta(X)V_\Berk(f).
\end{equation} 
Since $\epsilon>0$ is arbitrary, we obtain (\ref{eqn: Berkovich-Koksma}), completing the proof.
\end{proof}

\section{Fourier-analytic variation}\label{FourierVarSection}

In this section we derive a Koksma inequality using Fourier analysis on the local ring $\Ocal$.  Rudin \cite{MR0152834} is a standard reference for general Fourier analysis on locally compact abelian groups.  Let $\widehat{\Ocal}$ be the Pontryagin dual group of $\Ocal$; that is, the group of continuous additive characters $\gamma:\Ocal\to\TT$ under pointwise multiplication, where $\TT=\{z\in\CC\mid|z|=1\}$ is the circle group.  Let $\gamma_0\in\widehat{\Ocal}$ denote the trivial character, thus $\gamma_0(x)=1$ for all $x\in \Ocal$.

Given a character $\gamma\in\widehat{\Ocal}$, define the {\bf level} of $\gamma$ to be the smallest nonnegative integer $\ell$ with the property that $\gamma(x)=1$ for all $x\in \pi^\ell \Ocal$.  We denote the level of a character $\gamma\in\widehat{\Ocal}$ by $\ell(\gamma)$.  To see that such an integer always exists, let $\TT^+=\{z\in\TT\mid \mathrm{Re}(z)>0\}$ be the open right unit semicircle. Since $\gamma(0) = 1$ and $\gamma$ is continuous, there exists a neighborhood $\pi^\ell \Ocal$ of zero such that $\gamma(x)\in \TT^+$ for all $x\in\pi^\ell \Ocal$.  Moreover, since $\pi^\ell \Ocal$ is a subgroup of $\Ocal$ we must have that the image $\gamma(\pi^\ell \Ocal)$ is a subgroup of $\TT$. The only subgroup $G$ of $\TT$ entirely contained in $\TT^+$ is the trivial subgroup, because any $z\neq1$ in $\TT$ has the property that some positive power of $z$ has nonpositive real part.  We conclude that $\gamma(\pi^\ell \Ocal) = \{ 1 \}$.

For each $L\geq0$, the set $\widehat{\Ocal}_L=\{\gamma\in\widehat{\Ocal}\mid\ell(\gamma)\leq L\}$ is a subgroup of $\widehat{\Ocal}$.  Note that if $\ell(\gamma)\leq L$, then $\gamma$ factors through the quotient $\Ocal/\pi^L\Ocal$, and this induces an isomorphism between $\widehat{\Ocal}_L$ and the dual group of $\Ocal/\pi^L\Ocal$.  Since finite groups are self-dual, we conclude that $\widehat{\Ocal}_L$ has order $q^L$; in other words, $\widehat{\Ocal}$ contains exactly $q^L$ characters of level at most $L$.  A simple counting argument then shows that for each $\ell\geq1$, $\widehat{\Ocal}$ contains exactly $q^{\ell-1}(q-1)$ characters of level equal to $\ell$.

\begin{lem}\label{CharacterOrthogonalityLemma}
$\,$
\begin{itemize}
\item[{\bf (a)}] For all $L\geq0$ and $x\in \Ocal$, we have
\[
\sum_{\ell(\gamma)\leq L} \gamma(x)= \begin{cases}
q^L & \text{ if } |x|\leq 1/q^L \\
0 & \text{ if } |x|> 1/q^L.
\end{cases}
\]
\item[{\bf (b)}] Let $\gamma\in\widehat{\Ocal}$ be a nontrivial character of level $\ell=\ell(\gamma)\geq1$, and let $c_1,\dots,c_{q^\ell}$ be a complete set of coset representatives for the quotient $\Ocal/\pi^\ell \Ocal$. Then 
\[
\sum_{1\leq i\leq q^\ell} \gamma(c_i)=0.
\]
\end{itemize}
\end{lem}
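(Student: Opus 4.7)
The plan is to reduce both parts to standard character orthogonality on the finite abelian group $\Ocal/\pi^L\Ocal$. The key observation, already essentially noted in the paragraph preceding the lemma, is that a character $\gamma\in\widehat{\Ocal}$ has level at most $L$ precisely when $\gamma$ is trivial on $\pi^L\Ocal$, which is equivalent to $\gamma$ factoring through the finite quotient $\Ocal/\pi^L\Ocal$; this identifies $\widehat{\Ocal}_L$ with the Pontryagin dual of $\Ocal/\pi^L\Ocal$, a finite group of order $q^L$.

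For part (a), I would split on whether $x$ lies in $\pi^L\Ocal$. If $|x|\leq 1/q^L$, then $x\in\pi^L\Ocal$, so by the very definition of level at most $L$ every $\gamma\in\widehat{\Ocal}_L$ satisfies $\gamma(x)=1$; the sum is then just $|\widehat{\Ocal}_L|=q^L$. If instead $|x|>1/q^L$, then the image $\bar x$ of $x$ in the finite quotient $G=\Ocal/\pi^L\Ocal$ is nontrivial. Applying the standard orthogonality relation for a finite abelian group (namely, $\sum_{\chi\in\widehat{G}}\chi(g)=0$ whenever $g\neq 0$ in $G$, which follows from the existence of a character $\chi_0$ with $\chi_0(g)\neq 1$ and the substitution $\chi\mapsto\chi_0\chi$) to $G$ and $\bar x$, the sum becomes zero.

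For part (b), since $\gamma$ has level exactly $\ell$, it is trivial on $\pi^\ell\Ocal$, so the values $\gamma(c_i)$ depend only on the cosets $c_i+\pi^\ell\Ocal$, and $\gamma$ descends to a character $\bar\gamma$ of $\Ocal/\pi^\ell\Ocal$. Thus
\[
\sum_{i=1}^{q^\ell}\gamma(c_i)=\sum_{\bar x\in\Ocal/\pi^\ell\Ocal}\bar\gamma(\bar x).
\]
The character $\bar\gamma$ is nontrivial: if it were trivial then $\gamma$ would be trivial on all of $\Ocal$, hence on $\pi^{\ell-1}\Ocal$, contradicting $\ell(\gamma)=\ell\geq 1$. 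Applying finite-group orthogonality once more, the sum vanishes.

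There is no real obstacle here; the only thing to be careful about is articulating cleanly the identification between characters of $\Ocal$ of bounded level and characters of the finite quotient $\Ocal/\pi^L\Ocal$, so that the two statements become instances of the classical orthogonality relations for finite abelian groups.
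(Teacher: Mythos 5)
Your proof is correct and follows essentially the same route as the paper: the trivial count when $x\in\pi^L\Ocal$, and otherwise the standard shift argument (multiplying by a character nontrivial at $x$, which is exactly the identification of $\widehat{\Ocal}_L$ with the dual of $\Ocal/\pi^L\Ocal$ that the paper sets up before the lemma), with part (b) reduced to the same orthogonality on $\Ocal/\pi^\ell\Ocal$. No gaps.
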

\begin{proof}
{\bf (a)} If $|x|\leq 1/q^L$ then every character $\gamma$ with $\ell(\gamma)\leq L$ takes the value $1$ at $x$, and there are $q^L$ such characters.  If $|x|> 1/q^L$, then $x$ is nonzero in the finite quotient group $\Ocal/\pi^L\Ocal$, and so there exists a character $\gamma_1\in\widehat{\Ocal}$ of level $\ell(\gamma_1)\leq L$ with $\gamma_1(x)\neq1$.  Since $\widehat{\Ocal}_L=\{\gamma\in\widehat{\Ocal}\mid\ell(\gamma)\leq L\}$ is a subgroup of $\widehat{\Ocal}$, we have
\[
\sum_{\ell(\gamma)\leq L} \gamma(x)=\sum_{\ell(\gamma)\leq L} (\gamma_1\gamma)(x)=\gamma_1(x)\sum_{\ell(\gamma)\leq L} \gamma(x)
\]
which is possible only if $\sum_{\ell(\gamma)\leq L} \gamma(x)=0$, since $\gamma_1(x)\neq1$.

{\bf (b)} Since $\gamma$ has level $\ell$ it factors through the quotient $\Ocal/\pi^\ell \Ocal$ and defines a nontrivial character on that group.  The desired identity then follows from a similar argument as the second case of part {\bf (b)}, with the group $\Ocal/\pi^\ell \Ocal$ in place of $\widehat{\Ocal}_L$.
\end{proof}

Since $\Ocal$ is compact, $\widehat{\Ocal}$ is discrete.  Therefore we may associate to any Haar-integrable function $f:\Ocal\to\CC$ its Fourier series
\begin{equation}\label{FourierSeries}
f(x) \sim \sum_{\gamma\in \widehat{\Ocal}}\hat{f}(\gamma)\gamma(x)
\end{equation}
where the Fourier coefficients of $f$ are defined by
\[
\hat{f}(\gamma)= \int_{\Ocal}f(x)\overline{\gamma(x)}d\mu(x).
\]
The following result states that partial sums of the Fourier series of $f$, ordered with respect to level, converge to $f$ at all points of continuity.  This was proved by Taibleson \cite{MR217522} in characteristic $p$ using more or less the same argument.  We include the proof of the more general statement here.  

\begin{prop}\label{FourierSeriesProp}
If $f:\Ocal\to\CC$ is Haar-integrable and continuous at $x\in \Ocal$, then 
\[
f(x)=\lim_{L\to+\infty}\sum_{\ell(\gamma)\leq L}\hat{f}(\gamma)\gamma(x).
\]
If $f$ is continuous at all $x\in \Ocal$, then the convergence is uniform.
\end{prop}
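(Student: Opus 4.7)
The plan is to express the partial sum
\[
S_L f(x) = \sum_{\ell(\gamma)\leq L}\hat{f}(\gamma)\gamma(x)
\]
as an integral of $f$ against a Dirichlet-type kernel, and then recognize this kernel as a properly normalized characteristic function of a small disc centered at $x$. This reduces the convergence statement to the very intuitive fact that the average value of $f$ on a shrinking disc around $x$ converges to $f(x)$ when $f$ is continuous at $x$.

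Concretely, I would first substitute the definition of $\hat{f}(\gamma)$ into $S_L f(x)$ and interchange the (finite) sum over characters with the integral to obtain
\[
S_L f(x) = \int_{\Ocal} f(y) \sum_{\ell(\gamma)\leq L}\gamma(x-y)\, d\mu(y),
\]
using that $\overline{\gamma(y)}\gamma(x)=\gamma(x-y)$ since $\gamma$ is an additive character. Next I invoke Lemma~\ref{CharacterOrthogonalityLemma}(a): the inner sum equals $q^L$ when $|x-y|\leq 1/q^L$ and $0$ otherwise. Since $\mu(D_{1/q^L}(x))=1/q^L$, this collapses to
\[
S_L f(x) = \frac{1}{\mu(D_{1/q^L}(x))}\int_{D_{1/q^L}(x)} f(y)\, d\mu(y),
\]
i.e.\ the disc average $f(D_{1/q^L}(x))$ introduced in $\S$~\ref{BerkVarSection}.

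Having made this identification, pointwise convergence is immediate: given $\epsilon>0$, continuity of $f$ at $x$ yields $\delta>0$ with $|f(y)-f(x)|<\epsilon$ whenever $|y-x|<\delta$; choosing $L$ so that $1/q^L<\delta$ and using that $f(x)$ equals its own average over $D_{1/q^L}(x)$, we get
\[
|S_L f(x)-f(x)| \leq \frac{1}{\mu(D_{1/q^L}(x))}\int_{D_{1/q^L}(x)} |f(y)-f(x)|\, d\mu(y) \leq \epsilon.
\]
For the uniform statement, since $\Ocal$ is compact, continuity of $f$ everywhere upgrades to uniform continuity, so the $\delta$ (hence $L$) may be chosen independently of $x$, and the same bound holds uniformly in $x$.

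The only mildly technical point is justifying the interchange of sum and integral in the first step, but this is routine because the sum is finite (there are only $q^L$ characters of level at most $L$). So there is no serious obstacle; the proof is essentially a one-line computation once one recognizes the Dirichlet kernel as a scaled indicator of a disc, and the rest is the Lebesgue differentiation theorem in its trivial ultrametric form.
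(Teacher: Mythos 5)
Your proof is correct and follows essentially the same route as the paper: both expand the partial sum as a convolution with the kernel $K_L=\sum_{\ell(\gamma)\leq L}\gamma$, apply Lemma~\ref{CharacterOrthogonalityLemma}(a) to identify $K_L$ as $q^L$ times the indicator of a disc of radius $1/q^L$, and conclude by (uniform) continuity. Your additional observation that the partial sum equals the disc average $f(D_{1/q^L}(x))$ from $\S$~\ref{BerkVarSection} is a nice unifying remark but does not change the argument.
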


\begin{proof}
For each $L\geq0$ we define $K_L:\Ocal\to\RR$ by
\begin{equation}\label{DirechletKernelIdentity}
K_L(x) = \sum_{\ell(\gamma)\leq L} \gamma(x) 
= \begin{cases}
q^L & \text{ if } |x|\leq 1/q^L \\
0 & \text{ if } |x|> 1/q^L.
\end{cases}
\end{equation}
The stated identity was proved in Lemma~\ref{CharacterOrthogonalityLemma} {\bf (a)}.  This family of functions may be viewed as an analogue for the local ring $\Ocal$ of the Dirichlet kernel on the circle group $\RR/\ZZ$.  We observe from (\ref{DirechletKernelIdentity}) that $K_L:\Ocal\to\RR$ is nonnegative, $\int_\Ocal K_L\,d\mu=1$, and $K_L(y)=0$ whenever $|y|>1/q^{L}$.

An elementary calculation provides the convolution identity
\begin{equation*}  
(f * K_L) (x) = \int_{\Ocal} f(x-y) K_L(y) \, d\mu(y)= \sum_{\ell(\gamma)\leq L} \hat{f}(\gamma) \gamma(x).
\end{equation*}
Assume that $f$ is continuous at $x$, and let $\epsilon > 0$ be arbitrary.  Then we can find an $L_0$ large enough so that $|f(x-y)-f(x)| < \epsilon$ whenever $|y| \leq 1/q^{L_0}$.  Assuming that $L\geq L_0$, we have
\begin{eqnarray}
\label{eqn: Good-Kernel-Convergence-Zp}
\left|\left(\sum_{\ell(\gamma)\leq L} \hat{f}(\gamma) \gamma(x)\right)-f(x)\right|  &=& \left| \int_{\Ocal} f(x-y) K_L(y) \, d\mu(y) - f(x) \right| \cr
&=& \left| \int_{\Ocal} f(x-y) K_L(y) \, d\mu(y) - \int_{\Ocal} f(x) K_L(y) \, d\mu(y) \right| \cr
&= & \left| \int_{|y|\leq1/p^L} (f(x-y) - f(x)) K_L(y) d\mu(y)  \right| \cr
&\leq& \epsilon\int_{|y|\leq1/p^L} K_L(y) d\mu(y) =\epsilon,
\end{eqnarray}
establishing the desired convergence at $x$.  If $f$ is continuous on $\Ocal$, then it is uniformly continuous since $\Ocal$ is compact. In this case the choice of $L_0$ is independent of $x$ and the convergence $f * K_L\to f$ is uniform.  
\end{proof} 

We are now ready to prove a Fourier-analytic Koksma inequality on $\Ocal$, which may be viewed as a non-Archimedean analogue of a result by Kuipers-Niederreiter; see \cite{MR0419394} p. 161.  Given a Haar-integrable function $f:\Ocal\to\CC$, define its {\bf Fourier-analytic variation} by
\begin{equation*}
V_\Fourier(f)=\sum_{\substack{\gamma\in\widehat{\Ocal} \\ \gamma\neq\gamma_0}}q^{\ell(\gamma)}|\hat{f}(\gamma)|.
\end{equation*}

\begin{thm}\label{thm: Fourier-Koksma-Zp}
Let $f: \Ocal \to \RR$ be a continuous function, and let $X$ be a finite subset of $\Ocal$ with discrepancy $\Delta(X)$.  Then \begin{equation}
\label{eqn: Fourier-Koksma-Zp}
\left| \frac{1}{|X|} \sum_{x\in X} f(x) -\int_{\Ocal} f \, d\mu \right| \leq V_\Fourier(f)\Delta(X). 
\end{equation} 
\end{thm}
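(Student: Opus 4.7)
The plan is to expand $f$ in its Fourier series, reduce the problem to bounding the ``character discrepancy'' $\bigl|\frac{1}{|X|}\sum_{x\in X}\gamma(x) - \int_\Ocal \gamma\, d\mu\bigr|$ for each nontrivial $\gamma \in \widehat{\Ocal}$, and then exploit the fact that a character of level $\ell$ is constant on cosets of $\pi^\ell \Ocal$.

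First I would dispense with the trivial case $V_\Fourier(f) = +\infty$, where the claimed inequality holds automatically. Assuming $V_\Fourier(f) < +\infty$, the series $\sum_{\gamma \neq \gamma_0} |\hat{f}(\gamma)|$ converges (since $q^{\ell(\gamma)} \geq 1$), so by Proposition~\ref{FourierSeriesProp}, the Fourier series $\sum_\gamma \hat{f}(\gamma)\gamma(x)$ converges absolutely and uniformly to $f(x)$ on $\Ocal$. This absolute uniform convergence lets me interchange summation with both the finite sum $\frac{1}{|X|}\sum_{x \in X}$ and the Haar integral $\int_\Ocal$, producing
\begin{equation*}
\frac{1}{|X|}\sum_{x\in X}f(x) - \int_\Ocal f\, d\mu = \sum_{\gamma \neq \gamma_0} \hat{f}(\gamma)\left(\frac{1}{|X|}\sum_{x \in X}\gamma(x) - \int_\Ocal \gamma\, d\mu\right),
\end{equation*}
where the $\gamma = \gamma_0$ term drops out because $\gamma_0 \equiv 1$.

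Next, for a nontrivial character $\gamma$ of level $\ell = \ell(\gamma) \geq 1$, I would partition $\Ocal$ into the $q^\ell$ cosets $D_1, \ldots, D_{q^\ell}$ of $\pi^\ell \Ocal$ (each a disc of radius $1/q^\ell$), and fix representatives $c_i \in D_i$. Since $\gamma$ is constant on each $D_i$ with value $\gamma(c_i)$, the ``character discrepancy'' equals $\sum_{i=1}^{q^\ell}\gamma(c_i)\bigl(\tfrac{|X \cap D_i|}{|X|} - \mu(D_i)\bigr)$, where I have used Lemma~\ref{CharacterOrthogonalityLemma}(b) to see that $\int_\Ocal \gamma\, d\mu = \frac{1}{q^\ell}\sum_i \gamma(c_i) = 0$. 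Taking absolute values, using $|\gamma(c_i)| = 1$ together with the definition of $\Delta(X)$, yields the key estimate
\begin{equation*}
\left|\frac{1}{|X|}\sum_{x\in X}\gamma(x) - \int_\Ocal \gamma\, d\mu\right| \leq \sum_{i=1}^{q^\ell}\left|\frac{|X \cap D_i|}{|X|} - \mu(D_i)\right| \leq q^{\ell(\gamma)}\Delta(X).
\end{equation*}

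Finally, I would combine these ingredients: applying the triangle inequality to the Fourier expansion and inserting the character discrepancy bound gives
\begin{equation*}
\left|\frac{1}{|X|}\sum_{x\in X}f(x) - \int_\Ocal f\, d\mu\right| \leq \sum_{\gamma \neq \gamma_0}|\hat{f}(\gamma)|\, q^{\ell(\gamma)}\Delta(X) = V_\Fourier(f)\Delta(X).
\end{equation*}
The main point requiring care is the character discrepancy estimate, which is where Lemma~\ref{CharacterOrthogonalityLemma}(b) forces the factor of $q^{\ell(\gamma)}$ and motivates the definition of $V_\Fourier$; everything else is an application of uniform convergence of the Fourier series and the triangle inequality.
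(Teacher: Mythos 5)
Your proposal is correct and follows essentially the same route as the paper: expand $f$ in its absolutely and uniformly convergent Fourier series, isolate the nontrivial characters, and bound each character sum by $q^{\ell(\gamma)}\Delta(X)$ using the partition into cosets of $\pi^{\ell}\Ocal$ together with Lemma~\ref{CharacterOrthogonalityLemma}(b). The only cosmetic difference is that you carry the term $\int_\Ocal \gamma\, d\mu$ explicitly and observe it vanishes, whereas the paper drops it from the start by noting $\hat{f}(\gamma_0)=\int_\Ocal f\,d\mu$.
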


\begin{proof} 
Without loss of generality, we may assume that $V_\Fourier(f)<+\infty$, since otherwise (\ref{eqn: Fourier-Koksma-Zp}) holds trivially.  It follows from this assumption and Proposition~\ref{FourierSeriesProp} that the Fourier series (\ref{FourierSeries}) converges absolutely and uniformly to $f(x)$ for all $x\in \Ocal$.  Since $\hat{f}(\gamma_0)=\int_\Ocal fd\mu$, we have 
\begin{equation*}
\begin{split}
\left| \frac{1}{|X|} \sum_{x\in X} f(x)-\int_{\Ocal} f \, d\mu \right| & = \left|  \frac{1}{|X|} \sum_{x\in X} \sum_{\gamma\in \widehat{\Ocal}}\hat{f}(\gamma)\gamma(x) -\int_{\Ocal} f \, d\mu\right| \\
	& =  \left| \sum_{\gamma\neq\gamma_0}\hat{f}(\gamma)\left(\frac{1}{|X|} \sum_{x\in X} \gamma(x) \right)\right| \\
	& \leq \sum_{\gamma\neq\gamma_0}|\hat{f}(\gamma)|\left|\frac{1}{|X|} \sum_{x\in X} \gamma(x)\right|.
\end{split}
\end{equation*}

Let $\gamma\in\widehat{\Ocal}$ be a nontrivial character of level $\ell=\ell(\gamma)\geq1$.  Letting $c_1,\dots,c_{q^\ell}\in \Ocal$ be a set of distinct coset representatives for $\Ocal/\pi^\ell \Ocal$, we have a partition of $\Ocal$ into $q^\ell$ discs $D_1,\dots, D_{q^\ell}$ of radius $1/q^\ell$ centered at $c_1,\dots,c_{q^\ell}$ (respectively), and $\gamma$ is constant on each disc $D_i$.  By Lemma~\ref{CharacterOrthogonalityLemma} {\bf (b)} we have $\sum_{i} \gamma(c_i)=0$, and thus 
\begin{equation*}
\begin{split}
\left| \frac{1}{|X|} \sum_{x\in X} \gamma(x) \right| & =  \left| \sum_{1 \leq i \leq q^\ell} \gamma(c_i) \left( \frac{1}{|X|} \sum_{x\in X}  \Xcal_{D_i} (x)  - q^{-\ell} \right) \right| \\
	& \leq \sum_{1 \leq i \leq q^\ell} \left| \frac{1}{|X|} \sum_{x\in X}  \Xcal_{D_i} (x)  - q^{-\ell} \right| \\
	& \leq q^\ell \Delta(X).
\end{split}
\end{equation*}
Combining the last two displayed estimates we obtain (\ref{eqn: Fourier-Koksma-Zp}).
\end{proof}


\begin{ex} \label{FourierPoweringExample}
Let $c\in \Ocal$, let $t>0$, and consider the function $f:\Ocal\to\RR$ defined by $f(x) = |x-c|^t$.  We show that
\begin{equation}\label{AVVariation}
V_\Fourier(f) = \frac{q^{t}(q^t-1)(q-1)}{(q^{t+1}-1)(q^{t-1}-1)}
\end{equation}
By translation invariance we may assume without loss of generality that $c=0$ and thus $f(x)=|x|^t$.  For each $j\geq0$ define the disc $D_j=\{x\in \Ocal\mid |x|\leq1/q^j\}$ and the circle $S_j=\{x\in \Ocal\mid |x|=1/q^j\}$.  Since $f(x)$ takes the constant value $1/q^{tj}$ on each $S_j$, we have 
\begin{equation*}
\begin{split}
f(x) & = \sum_{j=0}^\infty \frac{1}{q^{tj}} \Xcal_{S_j}(x) \\
	& = \sum_{j=0}^\infty \frac{1}{q^{tj}} \left(\Xcal_{D_j}(x)-\Xcal_{D_{j+1}}(x)\right) \\
	& = \Xcal_{D_0}(x)+\sum_{j=1}^\infty \left(\frac{1}{q^{tj}}-\frac{1}{q^{t(j-1)}}\right)\Xcal_{D_j}(x) \\
	& = \Xcal_{D_0}(x)+\sum_{j=1}^\infty \frac{1-q^t}{q^{tj}}\Xcal_{D_j}(x).
\end{split}
\end{equation*}
For $D=D_{1/q^k}(0)$ and a nontrivial character $\gamma\in\widehat{\Ocal}$, it follows from Lemma~\ref{CharacterOrthogonalityLemma} {\bf (a)} that $\widehat{\Xcal}_D(\gamma)=1/q^k$ when $\ell(\gamma)\leq k$, and $\widehat{\Xcal}_D(\gamma)=0$ otherwise.  Since $\gamma\neq\gamma_0$ we have $\widehat{\Xcal}_{D_0}(\gamma)=0$ and thus 
\begin{equation*}
\begin{split}
\hat{f}(\gamma) & = \sum_{j\geq\ell(\gamma)}\frac{1-q^t}{q^{(t+1)j}} =\frac{C}{q^{(t+1)\ell(\gamma)}},
\end{split}
\end{equation*}
where $C=\frac{q^{t+1}(1-q^t)}{q^{t+1}-1}$.  Since for each $\ell\geq1$ there are exactly $q^{\ell-1}(q-1)$ characters in $\widehat{\Ocal}$ of level equal to $\ell$, we obtain
\begin{equation*}
\begin{split}
V_\Fourier(f) & = \sum_{\gamma\neq\gamma_0}q^{\ell(\gamma)}|\hat{f}(\gamma)| \\
	& = \sum_{\ell\geq1}q^{\ell-1}(q-1)q^\ell\frac{|C|}{q^{(t+1)\ell}} \\
	& = \frac{|C|(q-1)}{q(q^{t-1}-1)},
\end{split}
\end{equation*}
which is (\ref{AVVariation}).
\end{ex}

A standard argument shows that functions with finite Fourier-analytic variation satisfy a strong regularity property, as the following proposition shows; compare with the analogous result for Berkovich-analytic variation, Theorem~\ref{FiniteBAVImpliesContinuousThm}.

\begin{prop}\label{FiniteFAVImpliesContinuousThm}
If $f:\Ocal\to\RR$ is Haar-integrable and $V_\Fourier(f)<+\infty$, then there exists a unique continuous function $g:\Ocal\to\RR$ such that $f(x)=g(x)$ for Haar-almost all $x\in\Ocal$.
\end{prop}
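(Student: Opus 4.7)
The plan is to construct the required $g$ as the sum of the Fourier series of $f$, verify that $g$ has the same Fourier coefficients as $f$, and then deduce $f=g$ almost everywhere by showing the difference has vanishing integral over every disc. Since every nontrivial $\gamma\in\widehat{\Ocal}$ satisfies $\ell(\gamma)\geq 1$ and hence $q^{\ell(\gamma)}\geq q>1$, the hypothesis $V_\Fourier(f)<+\infty$ immediately implies the absolute summability $\sum_{\gamma\in\widehat{\Ocal}}|\hat{f}(\gamma)|<+\infty$. I would therefore define
\[
g(x)=\sum_{\gamma\in\widehat{\Ocal}}\hat{f}(\gamma)\gamma(x).
\]
Because each $\gamma$ is continuous with $|\gamma(x)|=1$, this series converges absolutely and uniformly on $\Ocal$, so $g$ is continuous as a uniform limit of continuous functions.

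Next I would verify that $\hat{g}(\gamma)=\hat{f}(\gamma)$ for every $\gamma\in\widehat{\Ocal}$. Using uniform convergence to interchange sum and integral, one has
\[
\hat{g}(\gamma)=\sum_{\gamma'\in\widehat{\Ocal}}\hat{f}(\gamma')\int_\Ocal \gamma'(x)\overline{\gamma(x)}\,d\mu(x),
\]
and the standard orthogonality relation $\int_\Ocal(\gamma'\overline{\gamma})\,d\mu=\delta_{\gamma',\gamma}$ (a direct consequence of Lemma~\ref{CharacterOrthogonalityLemma}, applied to the character $\gamma'\overline{\gamma}$) collapses the sum to the single term $\hat{f}(\gamma)$.

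The crux is then to conclude that $\int_D(f-g)\,d\mu=0$ for every disc $D\subseteq\Ocal$. For $D=D_{1/q^k}(a)$, Lemma~\ref{CharacterOrthogonalityLemma}(a) gives the finite expansion $\Xcal_D(x)=q^{-k}\sum_{\ell(\gamma)\leq k}\gamma(x-a)$, so $\int_D(f-g)\,d\mu$ becomes a finite linear combination of Fourier coefficients of $f-g$, all of which vanish by the previous step. Lemma~\ref{VanishingIntegralLemma} then delivers $f=g$ Haar-almost everywhere. Uniqueness is routine: if $g_1,g_2$ are both continuous and both equal to $f$ almost everywhere, then the continuous function $g_1-g_2$ vanishes on a set of full Haar measure, and since every nonempty open subset of $\Ocal$ has positive Haar measure, this forces $g_1=g_2$ identically.

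I expect the only real work to lie in correctly arranging the interchange of sum and integral and the Fourier expansion of $\Xcal_D$; both are justified by the absolute summability $\sum|\hat{f}(\gamma)|<+\infty$ and by Lemma~\ref{CharacterOrthogonalityLemma}. There is no deep obstacle here: this is essentially the non-Archimedean counterpart of the classical fact that a function with absolutely summable Fourier coefficients is equal almost everywhere to the continuous function defined by those coefficients, with the finiteness of the character expansion of a disc indicator streamlining the argument relative to the real setting.
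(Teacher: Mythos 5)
Your proof is correct. It agrees with the paper's in the first half: both construct $g$ as the sum of the Fourier series of $f$, which converges absolutely and uniformly because $V_\Fourier(f)<+\infty$ forces $\sum_{\gamma}|\hat{f}(\gamma)|<+\infty$ (the trivial character contributing only the single finite term $|\hat{f}(\gamma_0)|\leq\int_\Ocal|f|\,d\mu$). The two arguments diverge at the final step. The paper simply invokes Parseval's identity to get $\int_\Ocal|f-g|^2\,d\mu=0$; you instead check that $f-g$ has vanishing Fourier coefficients, expand the indicator of a disc $D_{1/q^k}(a)$ as the \emph{finite} character sum $q^{-k}\sum_{\ell(\gamma)\leq k}\gamma(x-a)$ via Lemma~\ref{CharacterOrthogonalityLemma}(a), conclude that $\int_D(f-g)\,d\mu=0$ for every disc, and finish with Lemma~\ref{VanishingIntegralLemma}. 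Your route is slightly longer but arguably more careful: Parseval as usually stated is an $L^2$ identity, whereas the hypothesis only puts $f$ in $L^1$, so the paper is implicitly appealing to the uniqueness theorem for Fourier coefficients of integrable functions; your disc-indicator argument proves exactly that uniqueness statement in this setting from first principles, using only tools already established in the paper. Your uniqueness argument for $g$ (a continuous function vanishing almost everywhere vanishes identically, since nonempty open sets have positive Haar measure) is the standard one and is fine.
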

\begin{proof}
It follows from the assumption $V_\Fourier(f)<+\infty$ and Proposition~\ref{FourierSeriesProp} that the Fourier series (\ref{FourierSeries}) converges absolutely and uniformly to a continuous function $g:\Ocal\to\RR$. Since $f$ and $g$ have the same Fourier coefficients, it follows from Parseval's identity on $\Ocal$ that $\int_\Ocal|f-g|^2\,d\mu=0$, which implies that $f=g$ Haar-almost everywhere.
\end{proof}

The following result gives a relationship between the Taibleson and Fourier-analytic variations. In particular, it shows that any real-valued function with rapidly decaying Fourier coefficients must have finite Taibleson variation.

\begin{prop}\label{TaiblesonLeqFourierProp}
For any continuous $f: \Ocal \to \RR$, we have $V_\Taib(f) \leq (2/q)V_\Fourier(f)$.
\end{prop}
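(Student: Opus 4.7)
The plan is to compare the two variations through a character-by-character estimate, exploiting the fact that a character of level $\ell$ is constant on any disc of radius $\leq 1/q^\ell$. First I would assume without loss of generality that $V_\Fourier(f) < +\infty$, since otherwise the inequality is trivial. Under this assumption, Proposition~\ref{FourierSeriesProp} (combined with the absolute majorization $|\hat f(\gamma)| \leq q^{\ell(\gamma)}|\hat f(\gamma)|$ for $\gamma \neq \gamma_0$) guarantees that the Fourier series $\sum_\gamma \hat f(\gamma)\gamma(x)$ converges absolutely and uniformly to $f(x)$ on $\Ocal$.

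Next, fix an arbitrary Taibleson partition $\Pi$ of $\Ocal$ and, for each $D \in \Pi$, write $\mu(D) = 1/q^{k_D}$. For $x,y \in D$ we have $x - y \in \pi^{k_D}\Ocal$, so any character $\gamma$ with $\ell(\gamma) \leq k_D$ satisfies $\gamma(x) = \gamma(y)$. Using the uniformly convergent Fourier expansion, this gives
\[
f(x) - f(y) = \sum_{\ell(\gamma) \geq k_D + 1} \hat f(\gamma)\bigl(\gamma(x) - \gamma(y)\bigr),
\]
and since $|\gamma(x) - \gamma(y)| \leq 2$ on the unit circle,
\[
\sup_{x,y \in D}(f(x) - f(y)) \;\leq\; 2 \sum_{\ell(\gamma) \geq k_D + 1} |\hat f(\gamma)|.
\]

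Summing over $D \in \Pi$ and then swapping the order of summation, I would obtain
\[
V_\Pi(f) \;\leq\; 2 \sum_{\gamma \neq \gamma_0} |\hat f(\gamma)| \cdot \#\{D \in \Pi \mid k_D < \ell(\gamma)\}.
\]
The key counting observation is that each disc $D \in \Pi$ counted here has $\mu(D) \geq 1/q^{\ell(\gamma) - 1}$, and since the discs in $\Pi$ are pairwise disjoint inside $\Ocal$ (total measure $1$), at most $q^{\ell(\gamma) - 1}$ such discs can occur. Plugging this in yields
\[
V_\Pi(f) \;\leq\; 2 \sum_{\gamma \neq \gamma_0} |\hat f(\gamma)|\, q^{\ell(\gamma) - 1} \;=\; \frac{2}{q}\, V_\Fourier(f),
\]
and taking the supremum over all Taibleson partitions completes the proof.

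There is no real obstacle here beyond the observation about counting discs of a given minimum size in a disjoint partition; the only care needed is to justify using the Fourier expansion termwise inside the sup, which is legitimate thanks to absolute and uniform convergence under the assumption $V_\Fourier(f) < +\infty$.
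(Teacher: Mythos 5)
Your proof is correct, and it rests on the same two pillars as the paper's: uniform absolute convergence of the Fourier series (from $V_\Fourier(f)<+\infty$ together with Proposition~\ref{FourierSeriesProp}), and the fact that a character of level $\ell$ is constant on any disc of radius at most $1/q^{\ell}$. Where you genuinely diverge is in how the double sum over discs and characters is estimated. The paper merges the two cases $\ell(\gamma)\leq k_D$ and $\ell(\gamma)>k_D$ into a single per-disc, per-character bound $|\gamma(x)-\gamma(y)|\leq 2\mu(D)\,q^{\ell(\gamma)-1}$, which yields the Lipschitz-type estimate $\sup_{x,y\in D}|f(x)-f(y)|\leq (2/q)\,\mu(D)\,V_\Fourier(f)$ on every disc $D$; the proposition then follows from $\sum_{D\in\Pi}\mu(D)=1$, exactly as in the proof of Proposition~\ref{TaibTipContinuousProp}. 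You instead discard the low-level characters outright, bound each surviving $|\gamma(x)-\gamma(y)|$ crudely by $2$, interchange the order of summation, and count: a disjoint family of discs each of measure at least $q^{1-\ell}$ inside $\Ocal$ has at most $q^{\ell-1}$ members. Both routes land on the same constant $2/q$. The paper's organization buys a stronger intermediate statement --- a genuine modulus-of-continuity bound $|f(x)-f(y)|\leq (2/q)|x-y|\,V_\Fourier(f)$, so finite Fourier-analytic variation implies Lipschitz continuity of the continuous representative --- whereas your version is a cleaner pure counting argument that avoids the slightly artificial inequality $2\leq 2\mu(D)q^{\ell-1}$ in the case $\ell>k_D$. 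All the steps you flag as needing care (termwise use of the expansion inside the supremum, vanishing of the low-level terms, the disc count) are correctly justified.
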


\begin{proof}
Without loss of generality, we may assume that $V_\Fourier(f)<+\infty$, since otherwise there is nothing to prove.  It follows from this assumption and Proposition~\ref{FourierSeriesProp} that the Fourier series (\ref{FourierSeries}) converges absolutely and uniformly to $f(x)$ for all $x\in \Ocal$.  Let $X$ be a disc in $\Ocal$ of radius $0<r\leq1$, say $r=1/q^k$ for $k\geq0$. For any pair $x,y\in X$ and any character $\gamma\in\widehat{\Ocal}$ of level $\ell=\ell(\gamma)$, we have
\[
|\gamma(x-y)-1|\leq 2rq^{\ell-1}.
\]
For if $\ell\leq k$, then since $|x-y| \leq r \leq 1/q^\ell$ and $\gamma$ takes the constant value $1$ on $\pi^\ell \Ocal$, we have $\gamma(x-y)=1$.  If on the other hand $\ell>k$, then $q \leq q^{\ell-k}=rq^{\ell}$ and thus $|\gamma(x-y)-1| \leq 2 \leq 2rq^{\ell-1}$. 

It follows that for all $x,y\in X$, we have
\begin{equation}\label{eqn: Variation-FS}
\begin{split}
|f(x) - f(y)| & = \left| \sum_{\gamma \neq \gamma_0} \hat{f}(\gamma) \left( \gamma(x) - \gamma(y) \right) \right| \\
&\leq  \sum_{\gamma \neq \gamma_0} |\hat{f}(\gamma)| \left|\gamma(x-y) -1 \right| \\
&\leq  \sum_{\gamma \neq \gamma_0} |\hat{f}(\gamma)| 2r q^{\ell(\gamma)-1} \\
&= (2/q)r V_\Fourier(f).
\end{split}
\end{equation}
The result follows by applying the upper bound (\ref{eqn: Variation-FS}) to each term in the sum $V_\Pi(f)$ associated to any Taibleson partition, and taking the supremum over all Taibleson partitions.  
\end{proof}

\section{Comparing the Koksma inequalities}\label{ComparingSection}

As a sample application, we consider $f:\Ocal\to\RR$ defined by $f(x)=|x-c|^t$ for $c\in \Ocal$ and $t>0$, and we compare the constants $C(f)$ in the Koksma inequalities of Theorems \ref{thm: Beer-Koksma}, \ref{Berkovich-Koksma}, and \ref{thm: Fourier-Koksma-Zp}, for the Beer variation, Berkovich-analytic variation, and Fourier-analytic variation, respectively.  These were calculated in Examples \ref{BeerPoweringExample}, \ref{BerkPoweringExample}, and \ref{FourierPoweringExample}, respectively. 

\begin{equation*}
\begin{split}
C_\Beer(f) & =2qV_\Beer(f) = 2q(|\alpha-c|^t+|\beta-c|^t)\\
C_\Berk(f) & =\left(1+\frac{1}{q}\right)V_\Berk(f) = \frac{2(q^2-1)}{q(q-q^{-t})} \\
C_\Fourier(f) & =V_\Fourier(f) = \frac{q^{t}(q^t-1)(q-1)}{(q^{t+1}-1)(q^{t-1}-1)} \\
\end{split}
\end{equation*}
Note that because $|\alpha-\beta|=1$, we have $2q\leq C_\Beer(f)\leq 4q$, with both extremes possible depending on the value of $c$.  

We first observe that $C_\Berk(f)< C_\Beer(f)$ is true for all $q$ and $t>0$, and thus the Berkovich-analytic Koksma inequality is always sharper than Beer's result for this family of functions.  

When $t$ is large we have $C_\Fourier(f)\approx q-1$, which is smaller than $C_\Beer(f)$, but not as small as $C_\Berk(f)\approx 2(q^2-1)/q^2$, except for $q=2$.  But as $t\to0$ and hence $f(x)\to1$ (except at $x=c$), we have $C_\Fourier(f)\to0$ but $C_\Berk(f)\to2(q+1)/q$, and thus $C_\Fourier(f)$ is the better constant in this range.

\def\cprime{$'$}



\end{document}